\documentclass[final,onefignum]{siamart190516}

\usepackage{amssymb,bbm,enumitem}
\usepackage{pgfplots,tikz}
\pgfplotsset{compat=1.14}


\newcommand\bg{\mathbf{g}}
\newcommand\bn{\mathbf{n}}
\newcommand\bq{\mathbf{q}}

\newcommand\bu{\mathbf{u}}
\newcommand\bv{\mathbf{v}}

\newcommand\by{\mathbf{y}}

\newcommand\bG{\mathbf{G}}
\newcommand\bQ{\mathbf{Q}}

\newcommand\bX{\mathbf{X}}

\newcommand{\Div}{\nabla\cdot}
\newcommand\eps{\epsilon}
\newcommand{\grad}{\nabla}

\newcommand{\ip}[2]{\ensuremath{\left<#1,#2\right>}}

\newcommand\RR{\mathbb{R}}

\title{Conservation laws for free-boundary fluid layers\thanks{Draft date: \today.  Supported by NASA grant \# NNX13AM16G.}}

\author{Ed Bueler\thanks{Dept.~of Mathematics and Statistics, University of Alaska Fairbanks \,\, (\texttt{elbueler@alaska.edu}).}}

\begin{document}
\maketitle

\begin{abstract}
Time-dependent models of fluid motion in thin layers, subject to signed source terms, represent important sub-problems within climate dynamics.  Examples include ice sheets, sea ice, and even shallow oceans and lakes.  We address these problems as discrete-time sequences of continuous-space weak formulations, namely (monotone) variational inequalities or complementarity problems, in which the conserved quantity is the layer thickness.  Free boundaries wherein the thickness and mass flux both go to zero at the margin of the fluid layer generically arise in such models.  After showing these problems are well-posed in several cases, we consider the limitations to discrete conservation in numerical schemes.  A free boundary in a region of negative source---an ablation-caused margin---turns out to be a barrier to exact conservation in either a continuous- or discrete-space sense.  We then propose computable \emph{a posteriori} quantities which allow conservation-error bounds in finite volume and finite element schemes.
\end{abstract}

\pagestyle{myheadings}
\thispagestyle{plain}
\markboth{ED BUELER}{CONSERVATION LAWS FOR FREE-BOUNDARY FLUID LAYERS}

\section{Introduction}  \label{sec:intro}

Consider a thin layer of fluid which is free to move about on a solid substrate.  Suppose that, in addition, mass can be added (accumulation, precipitation) or removed (ablation, evaporation) from the fluid layer by external processes.  Through flow and these addition/removal processes, the geometry of the layer varies in time and space.  We consider models of such fluid layers in which the layer geometry is described by a nonnegative thickness function.  In such models the addition/removal processes can be combined into a signed source term in a two-spatial-dimension mass conservation (or balance) equation.  Note that the addition/removal processes and the substrate topography are defined on a larger (fixed) region than the fluid-covered area.  Assuming the thickness function is continuous, the conservation equation applies only in the open set where the thickness is positive.  The problem of simultaneously determining the fluid motion and the fluid-covered domain is of free-boundary type.

The physics of such models couples the mass conservation equation to additional momentum and energy conservation laws.  The addition/removal processes, i.e.~the ``climate'' of the fluid layer, may also be coupled to the conservation equations, as when glacier thickness affects surface elevation and thus the precipitation rate.  Solving the resulting model, combining conservation equations, addition/removal processes, and additional closure relationships as needed, determines the nontrivial manner in which the layer geometry evolves.

This paper contains a basic, necessarily incomplete, analysis of the mathematical well-posedness of such climate-driven fluid layer models.  We start by extracting the minimal mathematical form, namely a scalar conservation equation and the nonnegative-thickness constraint.  After considering well-posedness based on several flux-form possibilities, we address tradeoffs and barriers inherent in the numerical solutions of such models.

Problems of this type appear within models of glaciers and ice sheets \cite{Bueler2016,CalvoDuranyVazquez2000,DiazSchiavi1999,
EgholmNielsen2010,JouvetBueler2012,JouvetBuelerGraeserKornhuber2013}, surface and subsurface hydrology \cite{AlonsoSantillanaDawson2008,Maxwelletal2015}, and sea ice \cite{LipscombHunke2004,Thorndikeetal1975}.  Generally, multiphysics Earth system models often contain thin-layer, free-boundary sub-models for various species (or phases) of fluids.  For example, in comprehensive models of glaciers and ice sheets there are submodels describing supra- and subglacial hydrology of liquid water \cite{Aschwandenetal2012,BuelervanPelt2015,Schoofetal2012}, floating ice shelves \cite{Albrechtetal2011}, and sediment transport \cite{Brinkerhoffetal2017}.

In such geophysical and climate-modeling contexts, determining the fluid-covered area is a leading-order modeling goal.  For example, snow and ice are much more reflective than the substrate they cover (i.e.~land or ocean), so deciding whether grid cells are ice-covered or ice-free is a significant modeling purpose.  A goal of equal importance is the conservation of mass, including a precise accounting of mass transfers to and from the modeled fluid phases.

The above geophysical applications drive the author's interest, but the situation is as familiar as the dynamics of rain droplets on a car windshield.  Precipitation, evaporation, gravity, wind stresses, and surface tension all combine to determine the evolution of the geometry of the drops and rivulets, and of the wetted and dry domains.  Note that models of such thin fluid flows often have not included any source term \cite[for example]{Kondic2003}, but those that include evaporation will require active enforcement of nonnegative layer thickness.

If the fluid is modeled as having constant density then the (nonnegative) layer thickness can be regarded as the conserved quantity, equivalent to mass per unit area.  In models for variable density fluids the vertical integral of density is the conserved quantity (in the two-dimensional conservation equation) and this variable must also be nonnegative.  For simplicity we consider the constant-density case and we call the conserved quantity ``mass'' and the corresponding nonnegative variable ``thickness''.

Now, to be more precise let us suppose that $\Omega \subset \RR^d$ is a bounded open region with regular (Lipschitz) boundary; note $d=1,2$ in cases of geophysical interest.  The layer thickness function $u(x,t)$ is defined for $x\in \Omega$ and $t \in [0,T]$.  Where there is no fluid we have $u(x,t)=0$.  The rate of flow is described by a vector flux $\bq$ and the climate (i.e.~the addition/removal processes) by a scalar, signed source term $f$; we discuss parameterizations below.

The models we consider are usually stated in strong form.  They include at least a mass conservation equation and an obvious, though sometimes-unstated, inequality constraint:
\begin{align}
u_t + \Div \bq &= f &&\text{in } \Omega \times (0,T), \text{ where } u > 0 \label{eq:massconserve} \\
u &\ge 0 &&\text{in } \Omega \times [0,T], \label{eq:constraint}
\end{align}
along with an initial condition $u(x,0)=u_0(x)\ge 0$ defined on $\Omega$.  We emphasize that conservation equation \eqref{eq:massconserve} applies only where the fluid is present ($u>0$), and not in the remainder of $\Omega$.  The situation is pictured in Figure \ref{fig:cartoon}, where positive source values ($f>0$) are pictured as downward arrows (precipitation).

\begin{figure}[ht]
\centerline{\includegraphics[width=0.8\textwidth]{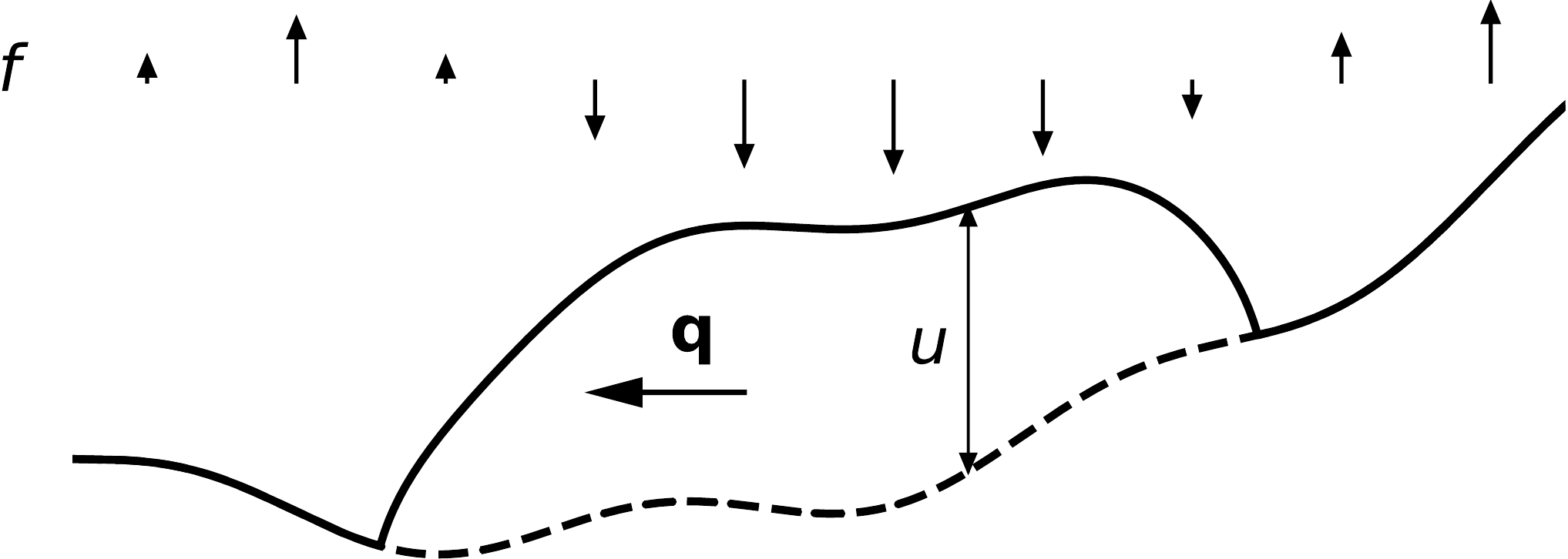}}
\caption{Schematic of a fluid layer with a thickness $u\ge 0$.}
\label{fig:cartoon}
\end{figure}

Evidently, analyzing the well-posedness of any model including \eqref{eq:massconserve} and \eqref{eq:constraint} requires additional information about $\bq$ and $f$, along with a specification of a space of admissible solutions $u$.  In most of this article we suppose that the flux $\bq$ is local, but otherwise quite general:
\begin{equation}
\bq = \bq(\grad u(x,t),u(x,t),x,t). \label{eq:fluxdepends}
\end{equation}
However, Subsection \ref{subsec:nonlocal} considers models where $\bq$ depends non-locally on integrals of $u$ over $\Omega$.  In many realistic models, computing this non-local dependence involves solving coupled differential equations.

In \eqref{eq:fluxdepends}, dependence of the flux on thickness is to be expected---thicker layers move more mass---as is dependence on $x$ because of substrate variations \cite[for example]{Bueler2016}.  The flux may additionally depend on $\grad u$ in flows which are gravity-driven and viscous; such flows are at least partly diffusive.  In simple cases the flux might be written in the form $\bq=- D \grad u + \bq_a$ where $D > 0$ has various dependence on $t,x,u,|\grad u|$---see Subsections \ref{subsec:plap} and \ref{subsec:powertransform} below---with the advective flux $\bq_a$ perhaps independent of $\grad u$.  In fact, equation \eqref{eq:massconserve} may be dominantly advective.  In the simplest advective case mass moves at some vertically-averaged velocity $\bX=\bX(x,t)$ determined by external factors, and we then have $\bq_a = \bX u$; see Subsection \ref{subsec:advect}.  In such cases we will add a small diffusion term to establish well-posedness.  Our results for such advective fluxes will apply even if $\bX$ comes from a (coupled) solution of a momentum conservation system, for example, as long as it has the regularity needed to apply the theory (Subsection \ref{subsec:fluxassumptions}).

In \eqref{eq:massconserve} the source function $f$ is allowed to be nonlinear in $u$ because feedback between layer thickness $u$ and the source $f$ occurs in certain applications \cite{Jouvetetal2011}.  However, when proving well-posedness in Section \ref{sec:wellposed} we simplify to the $u$-independent case $f=f(x,t)$; thus we do not address the impact of ``reaction'' type processes on well-posedness.

Numerical simulations of these fluid layers necessarily discretize time in some manner.  Section \ref{sec:strongform} considers time semi-discretizations of the mass conservation equation by implicit one-step methods.  (This is the method-of-lines in the orthogonal sense from the usual.)  In Section \ref{sec:weakform} we pose the continuous-space problem for a single time step in weak variational form so each time-step requires the solution of a (continuous) free-boundary problem in space.

An immediate question is:
  \begin{quote}
  \renewcommand{\labelenumi}{(\roman{enumi})}
  \begin{enumerate}
  \item \emph{Is a single time-step free-boundary problem well-posed?}
  \end{enumerate}
  \end{quote}
The answer to (i) depends on the form of the flux, but by examining a weak form and using the theory of monotone variational inequalities \cite{KinderlehrerStampacchia1980} we can show that the answer is often ``yes'' (Section \ref{sec:wellposed}).  However, even implicit cases, our sufficient conditions sometimes require a time-step restriction.

A second question is equally important in modeling practice:
  \begin{quote}
  \renewcommand{\labelenumi}{(\roman{enumi})}
  \begin{enumerate}
  \setcounter{enumi}{1}
  \item \emph{Can the mass of the fluid layer be conserved exactly in the sense that a computable space-time integral of the source term $f$ is equal to the change in mass during a time step?}
  \end{enumerate}
  \end{quote}
(This question makes sense when the answer to (i) is ``yes.'')  By considering question (ii) abstractly in Section \ref{sec:timeseries} we conclude that the answer is often ``no.''  In general a numerical model of a fluid layer governed by \eqref{eq:massconserve} and \eqref{eq:constraint} \emph{cannot} exactly conserve mass when the free boundary moves during a time-step.  Specifically, discrete-time conservation fails when margin retreat occurs, as generated by a negative source term ($f<0$).

We may, however, bound and report the mass conservation error in a practical manner.  Quantification of conservation errors in free-boundary models is a major purpose which guides the structure of this paper.  Of course, exact discrete conservation within the fluid, i.e.~away from any free boundaries, is a common goal, and property, of numerical schemes \cite[and references therein]{LeVeque2002}.  When we consider fully-discretized models in Section \ref{sec:spacediscretized} we will indeed \emph{assume} such exact discrete conservation in the interior of the fluid-covered domain.  The discrete conservation barriers we identify are thus entirely at the free boundary, and they are only active within negative source term areas.

Theoretical guidance as to achievable discrete conservation is generally absent in the literature of these free-boundary fluid problems.  Reference \cite{IdelsohnOnate2010} addresses a related conservation challenge at the free surfaces of fluids but the problem is not free-boundary in the same map-plane sense.  In the context of glacier \cite{JaroschSchoofAnslow2013} and ice shelf \cite{Albrechtetal2011} modeling, schemes for improved discrete mass conservation at free boundaries are proposed, but this small literature provides only \emph{ad hoc} and fully-discretized solutions.

The ideas and results in this paper are nontrivial if the source function $f(u,x,t)$ in \eqref{eq:massconserve} is sometimes negative.  If $f\ge 0$ holds everywhere then active enforcement of constraint \eqref{eq:constraint} may not be necessary because a maximum principle may imply the nonnegativity of the solution.  Indeed, we will see that there is no conservation error at the free boundary, at least in the continuous-space theory, when using a backward Euler temporal discretization, under the additional hypothesis that $f\ge 0$ in \eqref{eq:massconserve}.

Regarding the presence of a signed source term, the modeling goals of the debris flow \cite{GeorgeIverson2014} and tsunami run-up \cite{LeVequeetal2011} literature provide a useful contrast to our concerns.  These fluid-layer problems are of free-boundary type for a hyperbolic system of mass and momentum conservation equations.  The thickness $u$ of the flow must be nonnegative, and the discrete models allow wet ($u>0$) and dry ($u=0$) cells.  However, the time-scales are sufficiently short (seconds to hours) so that addition/removal sources like precipitation, evaporation, or absorption into the ground are usually absent from the conservation of mass equation; e.g.~$f=0$ in \eqref{eq:massconserve} in the models found in \cite{GeorgeIverson2014,LeVequeetal2011}.  Without such a source term the discrete-time sequence of free-boundary problems, if the model is formulated that way, call for constancy of the total mass, despite the moving boundary between wet and dry areas.  In these models nonnegative fluid-layer thickness can be preserved by maximum-principle or strong-stability properties of the discrete scheme, and exact discrete conservation can apply automatically.

The mass-conservation considerations and free-boundary techniques of the current paper could be applied to sea ice models, but subject to re-interpretation because of the manner in which the mass distribution is described in such models.  They typically track a non-negative probability distribution function $g(x,t,h)$, at each location $x$, where $h$ is the thickness dimension and $\int_0^\infty g\,dh = 1$ \cite[for example]{Thorndikeetal1975}.  Then $h$ is discretized into ``categories'' $g_k(x,t) = P\{H_{k-1} < h \le H_k\}$ with $g_0 = P\{h=0\}$ denoting the ice-free category \cite{LipscombHunke2004}.  Our results are relevant to the continuous-space equations which remain after discretization of $t$ and $h$.  In such models melting is a negative source term in the evolution equation for the $g_1$ category, thus (explicit) updating of $g_1^n(x) \approx \int_{H_0}^{H_1} g(x,t_n,h)\,dh$ requires truncation (projection) to maintain nonnegativity of $g_1^n$.  The inequality constraint $g_1^n \ge 0$ is a not-necessarily-stated, but in fact important, part of such schemes.

\section{Time semi-discretization}  \label{sec:strongform}

Let $\{t_n\}_{n=0}^N$ be a sequence of increasing times, with $t_0=0$ and $t_N=T$, and set $\Delta t_n = t_n-t_{n-1}>0$.  Corresponding to \eqref{eq:massconserve} and \eqref{eq:constraint}, the (strong form) \emph{single time-step problem} is
\begin{equation}
\frac{u_n - u_{n-1}}{\Delta t_n} + \Div \bQ_n(\grad u_n,u_n,x) = F_n(u_n,x) \qquad \text{where $u_n > 0$ in $\Omega$}, \label{eq:semimassconserve}
\end{equation}
and
\begin{equation}
u_n \ge 0 \qquad \text{at all points in } \Omega. \label{eq:semiconstraint}
\end{equation}
We expect this problem to determine a new thickness function $u_n(x) \approx u(x,t_n)$ given $u_{n-1}(x) \approx u(x,t_{n-1})$, as shown in Figure \ref{fig:timestepcartoon}.  The weak form of the problem is given in Section \ref{sec:weakform}, but we state the strong form first because of the developed intuition of most practitioners.

\begin{figure}[ht]
\begin{center}
\includegraphics[width=3.9in,keepaspectratio=true]{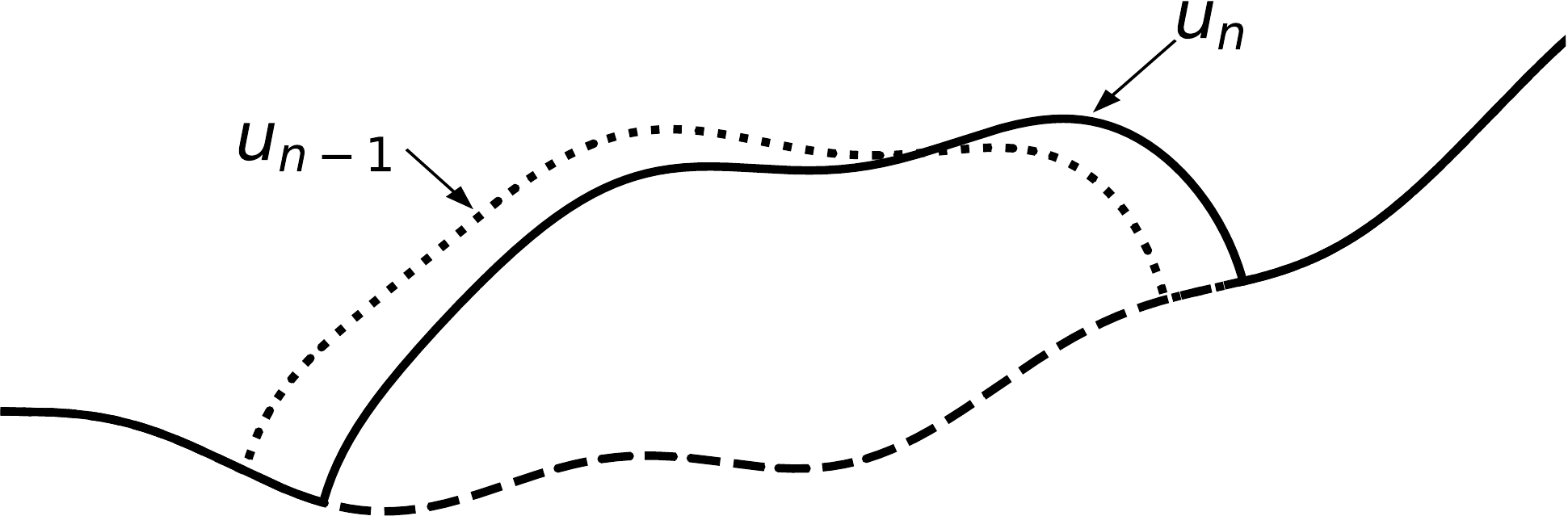}
\end{center}
\caption{The single time-step problem \eqref{eq:semimassconserve}, \eqref{eq:semiconstraint} is a free boundary problem for the new thickness $u_n\ge 0$.}
\label{fig:timestepcartoon}
\end{figure}

The semi-discretization procedure which generates equations \eqref{eq:semimassconserve} and \eqref{eq:semiconstraint}---we give examples next---corresponds to a choice of functions
\begin{equation}
\bQ_n(\bX,v,x), \quad F_n(v,x) \label{eq:functionalforms}
\end{equation}
derived from $\bq$ and $f$.  Here $\bX\in\RR^d$, $v\ge 0$, and $x\in \Omega$.  We will assume $\bQ_n$ is defined for all $x\in\Omega$, not just where $v(x)>0$.

\subsection{$\theta$ methods}  \label{subsec:thetamethods}  Consider a $\theta$-method discretization \cite{MortonMayers2005} of \eqref{eq:massconserve} with $0\le \theta \le 1$:
\begin{align}
  &\frac{u_n - u_{n-1}}{\Delta t_n} + \theta\, \Div \bq(\grad u_n,u_n,x,t_n) + (1-\theta) \Div \bq(\grad u_{n-1},u_{n-1},x,t_{n-1}) \label{eq:thetamethod} \\
  &\qquad =  \theta f(u_n,x,t_n) + (1-\theta) f(u_{n-1},x,t_{n-1}). \notag
\end{align}
Recall that the $\theta=0$ case is the forward Euler method, $\theta=1/2$ is trapezoid (Crank-Nicolson), and $\theta=1$ is backward Euler.  Equation \eqref{eq:thetamethod} is of form \eqref{eq:semimassconserve} with
\begin{align*}
\bQ_n(\bX,v,x) &= \theta\, \bq(\bX,v,x,t_n), \\
F_n(v,x)       &= \theta f(v,x,t_n) + (1-\theta) f(u_{n-1},x,t_{n-1}) \\
               &\qquad - (1-\theta) \Div \bq(\grad u_{n-1},u_{n-1},x,t_{n-1}).
\end{align*}

For any $\theta$ the source function $F_n$ ``absorbs'' all the terms which do not involve the flux $\bq$ evaluated at time $t_n$.  We will see that implicitness ($\theta>0$) is helpful both for the usual stability reasons \cite{MortonMayers2005} and to give the smoothness needed so that the weak form of \eqref{eq:semimassconserve}, \eqref{eq:semiconstraint} can be well-posed (Section \ref{sec:weakform}).  For the backward Euler scheme with $\theta=1$ observe that $\bQ_n = \bq(\bX,v,x,t_n)$ and $F_n = f(v,x,t_n)$, while if $\theta=0$ then $\bQ_n=0$ (Subsection \ref{subsec:explicit}).  Finally, such time-discretization need not be limited to $\theta$-methods; Appendix \ref{app:rk2} considers certain Runge-Kutta schemes.

\subsection{Associated set decomposition}  \label{subsec:setdecompose}  To derive the weak form, let us suppose \eqref{eq:semimassconserve} and \eqref{eq:semiconstraint} can be solved.  A solution $u_n$ then decomposes $\Omega$ into three disjoint regions:
\begin{align*}
\Omega_n &= \left\{x \in \Omega \,\big|\, u_n(x)>0\right\}, \\
\Omega_n^r &= \left\{x \in \Omega \,\big|\, u_n(x)=0 \text{ and } u_{n-1}(x) > 0\right\}, \\
\Omega_n^{00} &= \left\{x \in \Omega \,\big|\, u_n(x)=0 \text{ and } u_{n-1}(x) = 0\right\},
\end{align*}
so that
\begin{equation}
\Omega = \Omega_n \cup \Omega_n^r \cup \Omega_n^{00}.  \label{eq:omegadecomposition}
\end{equation}
Here the superscript ``$r$'' stands for ``retreat,'' and we call $\Omega_n^r$ the \emph{retreat set}.  Figure \ref{fig:domains} illustrates this decomposition.  Note that if $u_n$ and $u_{n-1}$ are continuous then $\Omega_n$ is open while $\Omega_n^{00}$ is closed (in $\Omega$).

\begin{figure}[ht]
\medskip
\begin{center}
\includegraphics[width=2.3in,keepaspectratio=true]{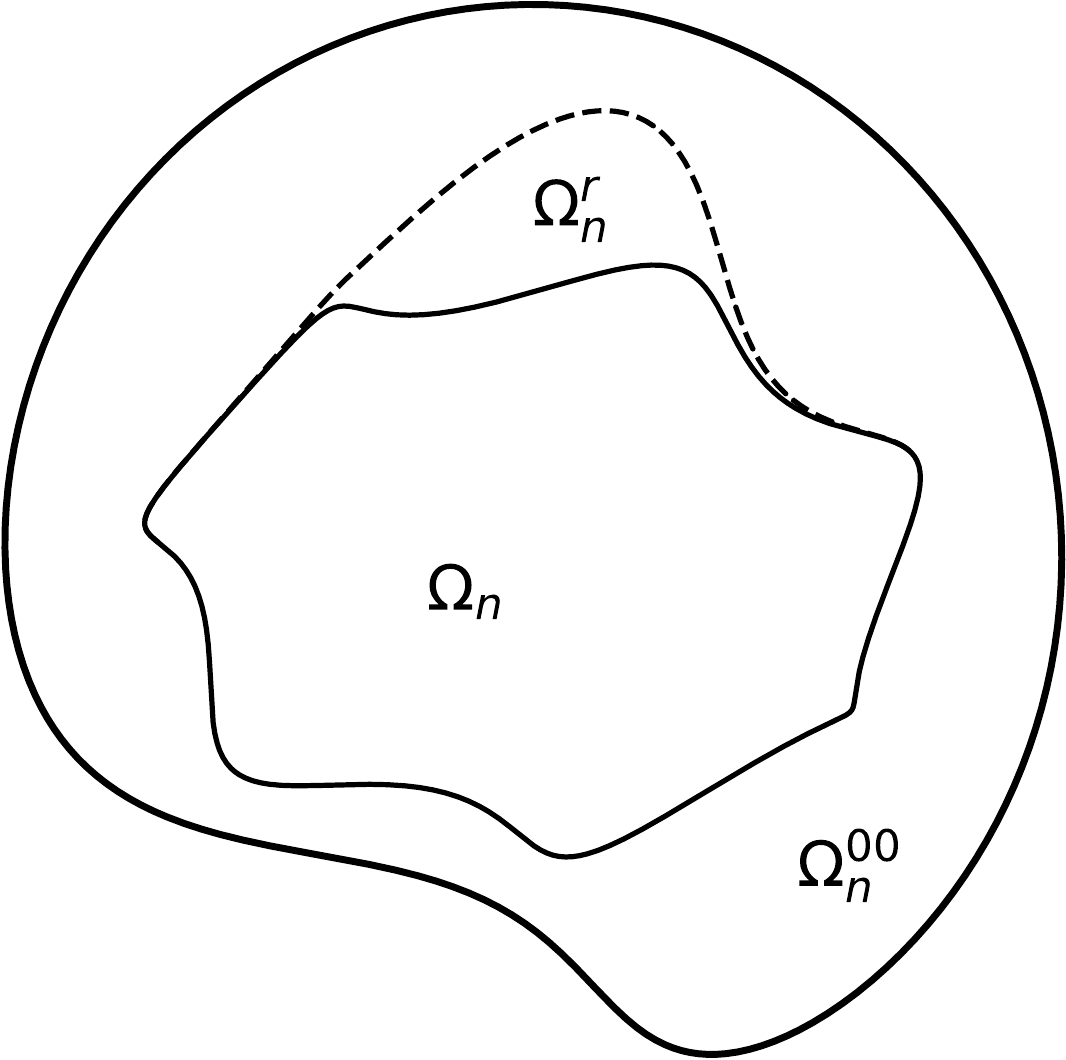}
\end{center}
\caption{A solution to \eqref{eq:semimassconserve} and \eqref{eq:semiconstraint} decomposes $\Omega$ as described in \eqref{eq:omegadecomposition}.}
\label{fig:domains}
\end{figure}

One may of course rewrite \eqref{eq:semimassconserve} as
\begin{equation}
u_n = u_{n-1} + \Delta t_n\, F_n - \Delta t_n\, \Div \bQ_n.  \label{eq:strongflat}
\end{equation}
The constraint $u_n \ge 0$ implies that the terms on the right side of \eqref{eq:strongflat} must sum to a nonnegative number.  While equation \eqref{eq:strongflat} applies where $u_n>0$, because $u_{n-1}\ge 0$ and $\Delta t_n > 0$ we expect that in the interior of $\Omega_n^r \cup \Omega_n^{00}$, where $\Div \bQ_n=0$ (see \eqref{eq:Qiszero} below), an inequality instead holds:
\begin{equation}
u_{n-1} + \Delta t_n\, F_n \le 0. \label{eq:strongconditionwherezero}
\end{equation}
Thus also $F_n \le 0$ on the same set.  Inequality \eqref{eq:strongconditionwherezero}, used below in deriving the weak form, says that the source term must be nonpositive in a zero-thickness location.

\section{Weak formulation of the single time-step problem}  \label{sec:weakform}

The form of the single time-step problem in \eqref{eq:semimassconserve}, \eqref{eq:semiconstraint} is in fact not adequate for mathematical progress.  PDE \eqref{eq:semimassconserve} applies only where its solution $u_n$ is positive, and inequality \eqref{eq:strongconditionwherezero} applies on the set where $u_n=0$, so we have ``posed'' a problem in terms of its solution.  This form is also inadequate because the boundary conditions satisfied by $u_n$ along the free boundary $\partial\Omega_n$ are not clear.  By contrast, the weak form in this section, a variational inequality \cite{Friedman1982,KinderlehrerStampacchia1980} on a convex set of admissible functions, refers only to the set $\Omega$ and its boundary $\partial\Omega$, and not to the sets in \eqref{eq:omegadecomposition}.

\subsection{Flux assumptions} \label{subsec:fluxassumptions}  We now state certain conditions on the discrete-time flux $\bQ_n$ which are sufficient to allow construction of a variational inequality (Subsection \ref{subsec:derivevi}), a smooth solution of which will also solve the strong-form problem in Subsection \ref{subsec:interior}.  Let $p\ge 1$.  Recall that the Sobolev space $W^{1,p}(\Omega)$ \cite{Evans2010} is the set of $v \in L^p(\Omega)$ satisfying $\partial_1 v,\dots,\partial_d v \in L^p(\Omega)$ and with norm
\begin{equation}
  \|v\|_{1,p} = \left(\|v\|_{L^p}^p + \sum_{i=1}^d \|\partial_i v\|_{L^p}^p\right)^{1/p}.  \label{eq:norm}
\end{equation}
If $p>d$ then $v\in W^{1,p}(\Omega)$ has a continuous representative \cite[``Morrey's inequality'']{Evans2010}, but otherwise $v$ may be discontinuous.  Denote by $W_0^{1,p}(\Omega)$ the closure of $C_c^\infty(\Omega)$ in $W^{1,p}(\Omega)$ and assume $p^{-1} + q^{-1} = 1$.

\begin{definition}  \label{ass:std}  We say $\bQ_n$ satisfies the \emph{standard flux assumptions} if
\renewcommand{\labelenumi}{\emph{\roman{enumi}})}
\begin{enumerate}
\item for each fixed $x\in \Omega$,
\begin{equation}
(\bX,z) \mapsto \bQ_n(\bX,z,x) \text{ is continuous on } \RR^d \times \RR,  \label{eq:Qiscontinuous}
\end{equation}
\item if $v \in W^{1,p}(\Omega)$ then
\begin{equation}
\bQ_n(\grad v,v,x) \in L^q(\Omega), \label{eq:QisLq}
\end{equation}
\item and
\begin{equation}
\bQ_n(\grad v,v,x)=0 \quad \text{a.e.~on } E_v = \left\{x\in\Omega\,\big|\,v(x)=0\right\}. \label{eq:Qiszero}
\end{equation}
\end{enumerate}
\end{definition}
The third assumption says that the mass flux in a zero-thickness fluid layer is zero.  Note that $\grad v = 0$ a.e.~on $E_v$ \cite[lemma A.4 in chapter II]{KinderlehrerStampacchia1980}.

Regarding the source term $F_n$ we assume only that if $v\in W^{1,p}(\Omega)$ then
\begin{equation}
F_n(v,x) \in L^q(\Omega).  \label{eq:FisLq}
\end{equation}

\subsection{A variational inequality weak formulation}  \label{subsec:derivevi}  To derive the weak form we need an extra smoothness assumption on $\bQ_n$:  For all open $S \subset \Omega$, if $v\in W^{1,p}(S)$ then
\begin{equation}
\frac{\partial}{\partial x_i} \bQ_n(\grad v,v,x) \in L^q(S). \label{eq:Qissmooth}
\end{equation}
This assumption will not be needed in later analysis of well-posedness of the resulting weak form (Section \ref{sec:wellposed}) or conservation errors (Sections \ref{sec:timeseries}--\ref{sec:spacediscretized}).

\begin{theorem} \label{thm:strongimpliesweak}  Suppose $u_n \in C(\overline{\Omega}) \cap W^{1,p}(\Omega)$ is a nonnegative function which solves \eqref{eq:semimassconserve} on $\Omega_n$ and \eqref{eq:strongconditionwherezero} on the interior of $\Omega_n^r \cup \Omega_n^{00}$.  Assume the boundaries of the sets $\Omega_n$ and $\Omega_n^r \cup \Omega_n^{00}$ in decomposition \eqref{eq:omegadecomposition} are Lipschitz, and that $\overline{\Omega_n} \subset \Omega$.  Suppose $\bQ_n$ satisfies the standard flux assumptions and \eqref{eq:Qissmooth}, $F_n$ satisfies \eqref{eq:FisLq}, and $\bQ = \bQ_n(\grad u_n,u_n,x)$ and $F=F_n(u_n,x)$ are continuous.  Then
\begin{equation}
-\int_{\Omega} \bQ \cdot \grad(v-u_n) \ge \int_{\Omega} \left(F - \frac{u_n - u_{n-1}}{\Delta t_n}\right) (v-u_n) \label{eq:morallytheVI}
\end{equation}
for any $v \in C(\overline{\Omega}) \cap W^{1,p}(\Omega)$ such that $v \ge 0$.
\end{theorem}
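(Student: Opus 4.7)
The strategy is to integrate the strong equation \eqref{eq:semimassconserve} against the test difference $v-u_n$ over $\Omega_n$, apply Green's formula, and then dispose of the contributions from the complementary set $\Omega_n^r \cup \Omega_n^{00}$ using the sign condition \eqref{eq:strongconditionwherezero} together with the flux-vanishing assumption \eqref{eq:Qiszero}. The decomposition \eqref{eq:omegadecomposition} is the organizing idea throughout.

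First, on the open set $\Omega_n$ where $u_n>0$, I multiply \eqref{eq:semimassconserve} by $v-u_n$ and integrate. Assumption \eqref{eq:Qissmooth} together with \eqref{eq:QisLq} provides $\Div\bQ \in L^q(\Omega_n)$ and $\bQ \in L^q(\Omega_n;\RR^d)$, while $v-u_n\in W^{1,p}(\Omega_n)$; because $\partial\Omega_n$ is Lipschitz, Green's formula applies and yields
\begin{equation*}
-\int_{\Omega_n}\bQ\cdot\grad(v-u_n) + \int_{\partial\Omega_n}(v-u_n)\,\bQ\cdot\bn\,dS = \int_{\Omega_n}\left(F-\frac{u_n-u_{n-1}}{\Delta t_n}\right)(v-u_n).
\end{equation*}
Since $\overline{\Omega_n}\subset\Omega$, continuity of $u_n$ forces $u_n=0$ on $\partial\Omega_n$; then \eqref{eq:Qiszero} gives $\bQ=0$ a.e.\ on $\{u_n=0\}$, and the assumed continuity of $\bQ$ upgrades this to $\bQ \equiv 0$ pointwise on $\partial\Omega_n$, killing the boundary integral.

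Second, on the complementary closed set $\Omega_n^r\cup\Omega_n^{00}$, where $u_n=0$, we have $\grad u_n=0$ a.e.\ (by the cited lemma from \cite{KinderlehrerStampacchia1980}) and $\bQ=0$ a.e.\ by \eqref{eq:Qiszero}, so the integrand $\bQ\cdot\grad(v-u_n)$ vanishes a.e.\ and contributes nothing to the left side of \eqref{eq:morallytheVI}. For the right side, on the interior of this set \eqref{eq:strongconditionwherezero} together with $u_n=0$ gives $F-\frac{u_n-u_{n-1}}{\Delta t_n} = F+\frac{u_{n-1}}{\Delta t_n} \le 0$, and multiplying by the nonnegative factor $v-u_n=v$ shows that the right-side integrand is nonpositive on this region (the boundary $\partial(\Omega_n^r\cup\Omega_n^{00})$ is Lipschitz, hence of measure zero, so can be ignored in the integral). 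Adding the $\Omega_n$ identity to the inequality on $\Omega_n^r\cup\Omega_n^{00}$ delivers \eqref{eq:morallytheVI} for all admissible $v$.

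The main technical difficulty is the boundary-term step: one must know that the trace of $\bQ\cdot\bn$ is well-defined and in fact zero on $\partial\Omega_n$. The smoothness hypothesis \eqref{eq:Qissmooth} is precisely what is needed for $\bQ$ to have sufficient regularity near $\partial\Omega_n$ for Green's formula, while the joint use of \eqref{eq:Qiszero} and the continuity of $\bQ = \bQ_n(\grad u_n,u_n,x)$ promotes ``a.e.\ zero'' on $\{u_n=0\}$ to ``pointwise zero'' on the Lipschitz interface $\partial\Omega_n$. Once this vanishing is secured, the rest is bookkeeping along the decomposition \eqref{eq:omegadecomposition}.
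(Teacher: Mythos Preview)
Your proof is correct and follows essentially the same route as the paper's: decompose $\Omega$ via \eqref{eq:omegadecomposition}, integrate by parts on $\Omega_n$, kill the boundary term using continuity of $\bQ$ and \eqref{eq:Qiszero}, and handle the complement by the sign condition \eqref{eq:strongconditionwherezero}. The only cosmetic difference is that the paper also integrates by parts on $\Omega_n^r\cup\Omega_n^{00}$ and then argues $\Div\bQ=0$ there, whereas you more directly observe $\bQ=0$ a.e.\ on that set so $\int\bQ\cdot\grad(v-u_n)$ vanishes without a second Green's formula.
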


\begin{proof}  Let $w=v-u_n$.  Using decomposition \eqref{eq:omegadecomposition} and integration by parts,
\begin{align}
-\int_{\Omega} \bQ \cdot \grad w &= \int_{\Omega_n} (\Div \bQ) w - \int_{\partial \Omega_n} (\bQ \cdot \bn) w \label{eq:intbypartsfromstrong} \\
  &\qquad\quad + \int_{\Omega_n^r \cup \Omega_n^{00}} (\Div \bQ) w - \int_{\partial(\Omega_n^r \cup \Omega_n^{00})} (\bQ \cdot \bn) w. \notag
\end{align}
(This needs assumption \eqref{eq:Qissmooth} on the sets $S=\Omega_n$ and $S=(\Omega_n^r \cup \Omega_n^{00})^\circ$.)  Because $u_n$ is continuous it follows that $u_n=0$ on $\partial \Omega_n$ and on $\partial(\Omega_n^r \cup \Omega_n^{00})$.  Thus by continuity, \eqref{eq:Qiscontinuous}, and \eqref{eq:Qiszero} we see that $\bQ=0$ on these boundaries, so the boundary integrals in \eqref{eq:intbypartsfromstrong} are zero.  Now, by \eqref{eq:semimassconserve} on $\Omega_n$, and by \eqref{eq:Qiszero} and \eqref{eq:Qissmooth} we have $\Div \bQ=0$ a.e.~on $\Omega_n^r \cup \Omega_n^{00}$.  Thus we get
\begin{equation}
-\int_{\Omega} \bQ \cdot \grad w = \int_{\Omega_n} \left(F - \frac{u_n - u_{n-1}}{\Delta t_n}\right) w. \label{eq:equalitybeforeVI}
\end{equation}
However, by \eqref{eq:strongconditionwherezero}, $F \le 0$ on $\Omega_n^r \cup \Omega_n^{00}$.  Since also $u_n=0$, $u_{n-1}\ge 0$, and $w = v-u_n = v \ge 0$ on $\Omega_n^r \cup \Omega_n^{00}$, we have
\begin{equation}
    0 \ge \int_{\Omega_n^r \cup \Omega_n^{00}} \left(F - \frac{u_n - u_{n-1}}{\Delta t_n}\right) w. \label{eq:inequalitybeforeVI}
\end{equation}
Adding \eqref{eq:equalitybeforeVI} and \eqref{eq:inequalitybeforeVI} gives \eqref{eq:morallytheVI}.
\end{proof}

While this derivation of inequality \eqref{eq:morallytheVI} requires many hypotheses, it adequately motivates a weak formulation, as follows.  Fix $p>1$ and denote $\mathcal{X} = W_0^{1,p}(\Omega)$, with dual space $\mathcal{X}'$ and pairing $\ip{\cdot}{\cdot}: \mathcal{X}' \times \mathcal{X} \to \RR$.

\begin{definition}  The set of admissible layer thicknesses is the following closed and convex subset of $\mathcal{X}$:
\begin{equation}
\mathcal{K} = \left\{v \in \mathcal{X} \,\big|\, v(x) \ge 0\, \text{ a.e.~} x \in \Omega\right\}.  \label{eq:defineK}
\end{equation}
\end{definition}

\begin{definition}  Suppose $u_{n-1}\in\mathcal{K}$ and $\Delta t_n>0$.  Assume that $\bQ_n$ satisfies the standard flux assumptions and that $F_n$ satisfies \eqref{eq:FisLq}.  Define $A_n:\mathcal{K} \to \mathcal{X}'$ by
\begin{equation}
  \ip{A_n(v)}{\phi} = \int_\Omega \left(v - \Delta t_n\, F_n(v,x) - u_{n-1}\right)\phi - \Delta t_n\, \bQ_n(\grad v,v,x) \cdot \grad\phi. \label{eq:defineAn}
\end{equation}
\end{definition}

\begin{definition}  We say $u_n\in\mathcal{K}$ \emph{solves the (weak) time-step problem} if
\begin{equation}
  \ip{A_n(u_n)}{v-u_n} \ge 0 \quad \text{for all } v \in \mathcal{K}.  \label{eq:theVI}
\end{equation}
\end{definition}

Variational inequality (VI) \eqref{eq:theVI} is the same as \eqref{eq:morallytheVI}.

\subsection{Interior condition}  \label{subsec:interior}  We now prove a converse of Theorem \ref{thm:strongimpliesweak} which makes no regularity assumptions on the set decomposition \eqref{eq:omegadecomposition}.

\begin{theorem} \label{thm:weakimpliesstrong}  Assume $F_n$ satisfies \eqref{eq:FisLq} and $\bQ_n$ satisfies the standard flux assumptions plus \eqref{eq:Qissmooth}.  Choose $u_{n-1}\in\mathcal{K}$, and suppose that $u_n\in\mathcal{K}$ solves \eqref{eq:theVI}.
\renewcommand{\labelenumi}{(\roman{enumi})}
\begin{enumerate}
\item If $S \subset \Omega_n$ is open, $\overline{S}\subset \Omega$, and $u_n$ is continuous on $S$ then \eqref{eq:semimassconserve} applies a.e.~on $S$.
\item If $S \subset \Omega_n^r \cup \Omega_n^{00}$ is open then \eqref{eq:strongconditionwherezero} applies a.e.~on $S$.
\end{enumerate}
\end{theorem}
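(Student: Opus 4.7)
The plan is the standard localization strategy for variational inequalities: construct admissible perturbations $v = u_n + \epsilon\phi$, substitute them into \eqref{eq:theVI}, and appeal to the fundamental lemma of the calculus of variations. The two cases differ in which perturbations lie in $\mathcal{K}$: two-sided perturbations are allowed wherever $u_n$ is bounded away from zero, while only nonnegative one-sided perturbations are allowed on $\{u_n = 0\}$, and this distinction is exactly what converts the equality \eqref{eq:semimassconserve} in (i) into the inequality \eqref{eq:strongconditionwherezero} in (ii).

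For part (i), I would fix $\phi \in C_c^\infty(S)$ and set $K = \mathrm{supp}(\phi) \subset S \subset \Omega_n$. Since $u_n$ is continuous and strictly positive on $S$ and $K$ is compact, $m := \min_K u_n > 0$. For every $\epsilon$ with $|\epsilon| < m/(1+\|\phi\|_\infty)$, the function $u_n \pm \epsilon\phi$ is nonnegative on $\Omega$, so $u_n \pm \epsilon\phi \in \mathcal{K}$. Using both $v = u_n + \epsilon\phi$ and $v = u_n - \epsilon\phi$ in \eqref{eq:theVI} gives $\ip{A_n(u_n)}{\phi} = 0$, which written out is
\begin{equation*}
  \int_\Omega \bigl(u_n - u_{n-1} - \Delta t_n F_n(u_n,x)\bigr)\phi - \Delta t_n \int_\Omega \bQ_n(\grad u_n,u_n,x) \cdot \grad\phi = 0.
\end{equation*}
Hypothesis \eqref{eq:Qissmooth} applied to $S$ justifies integration by parts in the flux term, with no boundary contribution because $\phi$ is compactly supported in $S$. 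Since this holds for every $\phi \in C_c^\infty(S)$, the fundamental lemma yields the pointwise identity \eqref{eq:semimassconserve} a.e.\ on $S$.

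For part (ii), take any $\phi \in C_c^\infty(S)$ with $\phi \ge 0$. Because $u_n = 0$ on $S$, the function $v = u_n + \epsilon\phi$ is nonnegative on $\Omega$ for every $\epsilon > 0$, so $v \in \mathcal{K}$. Substituting into \eqref{eq:theVI} and dividing by $\epsilon$ yields $\ip{A_n(u_n)}{\phi} \ge 0$. On $S$ we have $u_n = 0$ and hence $\grad u_n = 0$ a.e., so by \eqref{eq:Qiszero} the integrand $\bQ_n(\grad u_n,u_n,x)$ vanishes a.e.\ on $S$; since $\grad\phi$ is supported in $S$, the entire flux contribution in $\ip{A_n(u_n)}{\phi}$ is zero. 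The surviving inequality
\begin{equation*}
  \int_S \bigl(-u_{n-1} - \Delta t_n F_n(u_n,x)\bigr)\phi \ge 0 \qquad \text{for every nonnegative } \phi \in C_c^\infty(S)
\end{equation*}
then forces $u_{n-1} + \Delta t_n F_n(u_n,x) \le 0$ a.e.\ on $S$ by the one-sided fundamental lemma, which is \eqref{eq:strongconditionwherezero}.

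The main technical obstacle is the admissibility check in (i): one must use both continuity of $u_n$ on the open set $S$ and compactness of $\mathrm{supp}(\phi)$ to guarantee that two-sided perturbations remain in $\mathcal{K}$, and this is the sole purpose of the continuity hypothesis. Note also that (i) needs the extra smoothness assumption \eqref{eq:Qissmooth} to integrate by parts, while (ii) does not, because the flux-vanishing assumption \eqref{eq:Qiszero} already eliminates the divergence-type contribution on $\{u_n = 0\}$ without any derivative passing.
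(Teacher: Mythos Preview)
Your proposal is correct and follows essentially the same route as the paper: two-sided compactly supported perturbations on $\{u_n>0\}$ to recover the equality via integration by parts, and one-sided nonnegative perturbations on $\{u_n=0\}$ where \eqref{eq:Qiszero} kills the flux term directly. The only cosmetic differences are that the paper takes $\eps=1$ in part (ii) rather than dividing through, and invokes \eqref{eq:Qiszero} directly (which already asserts $\bQ_n=0$ a.e.\ on $\{u_n=0\}$) without the intermediate observation that $\grad u_n=0$ a.e.\ there.
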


\begin{proof}  Let $\bQ = \bQ_n(\grad u_n,u_n,x)$ and $F=F_n(u_n,x)$.  Supposing $S \subset \Omega_n$ is open, choose any $\phi\in C_c^\infty(S)$ and extend it by zero to all of $\Omega$; note that $\phi$ can have either sign, but that $\phi=0$ on $\partial\Omega$.  Let $v = u_n + \eps \phi$ and note that $v \in \mathcal{K}$ as long as $\eps\in\RR$ is sufficiently small in magnitude.  (Specifically, if $|\eps|\le \eps_0 = \min u_n(x) / \max |\phi(x)|$, with the minimum and maximum taken over the closure of the support of $\phi$, then $v \in \mathcal{K}$.)  It follows from \eqref{eq:theVI} that
   $$\eps \int_\Omega \left(u_n - \Delta t_n\,F - u_{n-1}\right)\phi - \Delta t_n\,\bQ \cdot \grad \phi \ge 0.$$
This is true for all sufficiently-small $\eps$, of either sign, and thus the integral is zero.  Integration by parts, using assumption \eqref{eq:Qissmooth} and $\phi\big|_{\partial\Omega}=0$, gives
   $$\int_\Omega \left[ u_n - \Delta t_n\,F - u_{n-1} + \Delta t_n\,\Div\bQ \right]\phi = 0.$$
Because $\phi\in C_c^\infty(S)$ is arbitrary, the quantity in square brackets is zero a.e.~on $S$, i.e.~\eqref{eq:semimassconserve}, which proves \emph{(i)}.

Now suppose $S \subset \Omega_n^r \cup \Omega_n^{00}$.  Choose any \emph{nonnegative} $\phi\in C_c^\infty(S)$, extend it by zero, and let $v = u_n + \phi$ so $v\in\mathcal{K}$.  Note $u_n=0$ on the support of $\phi$.  By assumptions \eqref{eq:Qiscontinuous} and \eqref{eq:Qiszero}, $\bQ=0$ on the support of $\phi$.  Thus by \eqref{eq:theVI},
    $$0 \ge \int_{\Omega} \left(u_{n-1} + \Delta t_n\, F\right) \phi,$$
and it follows that \eqref{eq:strongconditionwherezero} a.e.~on $S$. \end{proof}

Thus, under a regularity assumption \eqref{eq:Qissmooth} on $\bQ_n$, a solution of \eqref{eq:theVI} solves PDE \eqref{eq:semimassconserve} where it is positive, but where it is zero inequality \eqref{eq:strongconditionwherezero} holds.  From now on we will use set decomposition \eqref{eq:omegadecomposition} only when referring to a solution $u_n$ of the weak form \eqref{eq:theVI}, and we will not need assumption \eqref{eq:Qissmooth}.

\section{Well-posedness of the time-step problem} \label{sec:wellposed}

We show in this Section that a variety of different fluxes $\bQ_n$ yield well-posed VI problems \eqref{eq:theVI}.  Later, the \emph{a posteriori} analysis of conservation errors in Sections \ref{sec:timeseries} and \ref{sec:spacediscretized} will assume that \eqref{eq:theVI} is well-posed.

Techniques for proving well-posedness of VIs in Banach spaces are relatively well-established for linear and some nonlinear elliptic problems, especially for monotone operators \cite{KinderlehrerStampacchia1980}, which we recall next.  Thereby we prove existence and uniqueness of the solution to \eqref{eq:theVI} for certain flux cases in these Subsections:
\begin{itemize}
\item[\ref{subsec:plap}] $p$-Laplacian-type parabolic (diffusion) for $1<p<\infty$,
\item[\ref{subsec:powertransform}] doubly-nonlinear parabolic, including porous media,
\item[\ref{subsec:advect}] linear advective, with small added diffusion term, and
\item[\ref{subsec:nonlocal}] linear and non-local, computed by integrals over $\Omega$.
\end{itemize}
These subsections only use the backward Euler time-stepping discretization, but the results can be extended to implicit $\theta$-methods, for example.  At the end, Subsection \ref{subsec:explicit} shows that if time-stepping is explicit then regularity issues generally block these time-step problems from being well-posed.

\subsection{Monotone variational inequalities} \label{subsec:mono}  Assume that $\mathcal{K}$ is any closed and convex subset of a Banach space $\mathcal{X}$.  The following definitions can be found in \cite{KinderlehrerStampacchia1980}.  A mapping $A : \mathcal{K} \to \mathcal{X}'$ is \emph{monotone} if, for all $u,v\in\mathcal{K}$,
\begin{equation}
   \ip{A(u) - A(v)}{u-v} \ge 0.  \label{eq:monodef}
\end{equation}
(This holds if $A$ is linear and positive semi-definite.)  It is \emph{strictly monotone} if equality in \eqref{eq:monodef} implies $u=v$.  Mapping $A$ is \emph{coercive} if there is $\phi\in \mathcal{K}$ so that
\begin{equation}
   \lim_{\|u\|_{\mathcal{X}}\to\infty} \frac{\ip{A(u) - A(\phi)}{u-\phi}}{\|u-\phi\|_{\mathcal{X}}} = +\infty, \label{eq:coercivedef}
\end{equation}
where the limit is taken over $u\in\mathcal{K}$.  Finally, a mapping $A : \mathcal{K} \to \mathcal{X}'$ is \emph{continuous on finite-dimensional subspaces} if for each finite-dimensional subspace $\mathcal{M} \subset \mathcal{X}$ the restriction $A : \mathcal{K}\cap \mathcal{M} \to \mathcal{X}'$ is weakly-continuous.  The theory of monotone VIs in Banach spaces \cite[chapter III]{KinderlehrerStampacchia1980} shows that a solution to a VI like \eqref{eq:theVI}, namely $\ip{A(u)}{v-u} \ge 0$ for all $v\in\mathcal{K}$, exists and is unique if $A$ is strictly monotone, coercive, and continuous on finite-dimensional subspaces.

Consider \eqref{eq:theVI} with $A_n$ defined by \eqref{eq:defineAn}.  It is easy to show the following lemma.

\begin{lemma}  \label{lem:continuous}  Assume \eqref{eq:Qiscontinuous} for $\bQ_n$ and that $F_n(v,x)\in L^q$ is continuous in $v$.  The map $A_n$ is continuous on finite-dimensional subspaces.
\end{lemma}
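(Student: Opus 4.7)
The plan is to reduce to strong convergence on the finite-dimensional subspace and pass to the limit in each of the three integrals comprising \eqref{eq:defineAn}. Fix $\mathcal{M} \subset \mathcal{X}$ of dimension $m$ with basis $e_1, \dots, e_m$, and take any sequence $v_k \to v$ in $\mathcal{K}\cap\mathcal{M}$. Since all norms on $\mathcal{M}$ are equivalent, weak convergence there coincides with norm convergence, so $v_k \to v$ in $W^{1,p}(\Omega)$. Writing $v_k = \sum_i c_k^i e_i$ with bounded coefficients $c_k^i \to c^i$ in $\RR$, after passing to a subsequence I may assume pointwise a.e.~convergence $v_k \to v$ and $\grad v_k \to \grad v$, together with the uniform envelope $|v_k(x)| + |\grad v_k(x)| \le g(x) := C \sum_{i=1}^m (|e_i(x)| + |\grad e_i(x)|)$, where $g \in L^p(\Omega)$.

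Weak-$*$ continuity of $A_n$ on $\mathcal{M}$ reduces to showing $\ip{A_n(v_k)}{\phi} \to \ip{A_n(v)}{\phi}$ for each fixed $\phi \in \mathcal{X}$. The linear terms $\int_\Omega(v_k - u_{n-1})\phi$ converge by strong $L^p$ convergence and H\"older's inequality. For the source integral, the hypothesis that $F_n(v,\cdot) \in L^q$ is continuous in $v$ gives $F_n(v_k,\cdot) \to F_n(v,\cdot)$ in $L^q(\Omega)$, and a second H\"older estimate against $\phi \in L^p$ suffices. For the flux integral, pointwise continuity \eqref{eq:Qiscontinuous} combined with the a.e.~convergence of $(\grad v_k, v_k)$ yields $\bQ_n(\grad v_k, v_k, x) \to \bQ_n(\grad v, v, x)$ a.e., after which I would apply dominated convergence against $\grad\phi \in L^p(\Omega)$; the usual subsequence-of-every-subsequence trick promotes this to convergence of the full sequence.

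The main technical obstacle is producing an $L^1$-dominant for $|\bQ_n(\grad v_k, v_k, x)\cdot\grad\phi|$. The natural envelope $M(x) := \sup\{|\bQ_n(\bX, z, x)| : |\bX|, |z|\le g(x)\}$ is measurable by \eqref{eq:Qiscontinuous}, and the uniform bound on $(\grad v_k, v_k)$ furnishes $|\bQ_n(\grad v_k, v_k, x)| \le M(x)$ a.e., so the remaining task is to verify $M \in L^q(\Omega)$. This is immediate for the concrete flux models of Subsections \ref{subsec:plap}--\ref{subsec:advect}, where $\bQ_n$ has explicit polynomial growth in $(\bX, z)$ and the envelope is dominated by a fixed power of $g$. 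Abstractly, the cleanest route is to invoke Krasnoselskii's theorem on Nemytskii operators, which combines pointwise continuity \eqref{eq:Qiscontinuous} with the mapping property \eqref{eq:QisLq} to upgrade $v \mapsto \bQ_n(\grad v, v, \cdot)$ to a continuous map from $W^{1,p}(\Omega)$ into $L^q(\Omega)$ on bounded sets, which is much stronger than what the lemma requires.
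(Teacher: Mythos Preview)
Your argument is correct and is the standard route; the paper itself supplies no proof, merely prefacing the lemma with ``It is easy to show.''  So there is nothing to compare against beyond saying you have filled in what the author left implicit.

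One small refinement to your last paragraph: Krasnoselskii's theorem on Nemytskii operators requires that the superposition map carry \emph{all} of $L^p(\Omega;\RR^d)\times L^p(\Omega)$ into $L^q$, whereas hypothesis \eqref{eq:QisLq} only asserts this for pairs $(\grad v,v)$ arising from some $v\in W^{1,p}$, a proper subspace.  Thus \eqref{eq:QisLq} alone does not literally trigger Krasnoselskii, and the envelope $M(x)=\sup_{|\bX|,|z|\le g(x)}|\bQ_n(\bX,z,x)|$ need not lie in $L^q$ under just the standard flux assumptions.  In the monotone-operator literature (e.g.\ Kinderlehrer--Stampacchia, chapter~III) one normally assumes a polynomial growth bound on $\bQ_n$ directly, which is what the concrete fluxes of Subsections \ref{subsec:plap}--\ref{subsec:nonlocal} all satisfy, and under which your dominated-convergence step goes through without appeal to Krasnoselskii.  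Your diagnosis of where the difficulty lies is exactly right; just be aware that in full abstraction a growth hypothesis is an additional input, not a consequence of \eqref{eq:QisLq}.
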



Now we want to relate the properties of the flux $\bQ_n$ to the monotonicity and coercivity of $A_n$.  From \eqref{eq:defineAn} the following calculation applies when $F_n=F_n(x)$, i.e.~when the source function is independent of the thickness $v$:
\begin{align}
   &\ip{A_n(u) - A_n(v)}{u-v}  \label{eq:AtoQcalculation} \\
   &\qquad = \int_\Omega (u-v)^2 - \Delta t_n\, \left[\bQ_n(\grad u,u,x) - \bQ_n(\grad v,v,x)\right] \cdot \grad(u-v).  \notag
\end{align}
Observe that in cases where $\bQ_n(\grad u,u,x)$ is proportional to $\grad u$ we expect that, for usable models, the flux $\bQ_n$ points generally in the direction of the negative of $\grad u$.  (Otherwise PDE \eqref{eq:massconserve} would behave as the ill-posed backward heat equation.)

The proof of the following lemma is an easy consequence of \eqref{eq:AtoQcalculation} (and is omitted).  Note that $W^{1,p}(\Omega) \subset L^2(\Omega)$ if either $p>d$ or $d\le 2$ \cite[theorems 5.6.2 and 5.6.5]{Evans2010}.

\begin{lemma}  \label{lem:monotonecoercive}  Suppose $W^{1,p}(\Omega) \subset L^2(\Omega)$.  Suppose \eqref{eq:QisLq} and that $F_n=F_n(x) \in L^q(\Omega)$.  Then
\renewcommand{\labelenumi}{(\roman{enumi})}
\begin{enumerate}
\item  $A_n$ is monotone if there is $C\le 1$ so that, for all $u,v \in \mathcal{K}$,
\begin{equation}
\int_\Omega \left[\bQ_n(\grad u,u,x) - \bQ_n(\grad v,v,x)\right] \cdot \grad(u-v) \le \frac{C}{\Delta t_n} \|u-v\|_{L^2}^2. \label{eq:Qnmonotone}
\end{equation}
\item  $A_n$ is strictly-monotone if \eqref{eq:Qnmonotone} holds with $C<1$,
\item  $A_n$ is coercive if there is $c>0$ and $r>1$ so that, for all $u,v \in \mathcal{K}$,
\begin{equation}
\int_\Omega \left[\bQ_n(\grad u,u,x) - \bQ_n(\grad v,v,x)\right] \cdot \grad(u-v) \le - c \|u-v\|_{1,p}^r. \label{eq:Qncoercive}
\end{equation}
\end{enumerate}
\end{lemma}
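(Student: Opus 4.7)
The plan is to read off everything directly from identity \eqref{eq:AtoQcalculation}, which was derived just above the lemma under exactly the hypotheses we are given ($F_n$ independent of $v$, and \eqref{eq:QisLq}). The embedding hypothesis $W^{1,p}(\Omega)\subset L^2(\Omega)$ guarantees that $\|u-v\|_{L^2}$ is a finite and meaningful quantity for $u,v\in\mathcal{K}\subset\mathcal{X}$, so every estimate below makes sense.

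For \emph{(i)}, I would multiply the structural hypothesis \eqref{eq:Qnmonotone} by $\Delta t_n$, obtaining an upper bound of $C\|u-v\|_{L^2}^2$ on the flux-difference integral in \eqref{eq:AtoQcalculation}. Subtracting this from the $\int_\Omega(u-v)^2$ term gives
\begin{equation*}
\ip{A_n(u)-A_n(v)}{u-v}\ge (1-C)\,\|u-v\|_{L^2}^2\ge 0
\end{equation*}
whenever $C\le 1$, which is the definition of monotonicity. For \emph{(ii)}, the same inequality with $C<1$ shows that the left-hand side can vanish only when $\|u-v\|_{L^2}=0$; since $W^{1,p}\hookrightarrow L^2$ is an embedding, this forces $u=v$ a.e.\ in $\Omega$, hence as elements of $\mathcal{X}$, establishing strict monotonicity.

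For \emph{(iii)}, I would take $\phi=0$ as the reference point in the coercivity definition \eqref{eq:coercivedef}; this is legitimate because $0\in W_0^{1,p}(\Omega)$ trivially satisfies the nonnegativity constraint and thus lies in $\mathcal{K}$. Using \eqref{eq:AtoQcalculation} with $v=0$ and then applying hypothesis \eqref{eq:Qncoercive} (also with $v=0$) yields
\begin{equation*}
\ip{A_n(u)-A_n(0)}{u-0}\ge \|u\|_{L^2}^2 + c\,\Delta t_n\,\|u\|_{1,p}^{r}\ge c\,\Delta t_n\,\|u\|_{1,p}^{r}.
\end{equation*}
Dividing by $\|u-0\|_{\mathcal{X}}=\|u\|_{1,p}$ and sending $\|u\|_{1,p}\to\infty$ produces a lower bound of $c\,\Delta t_n\,\|u\|_{1,p}^{r-1}$, which diverges because $r>1$.

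There is no real obstacle here; the author explicitly labels the result an ``easy consequence'' of \eqref{eq:AtoQcalculation}, and indeed every part is a one-line substitution. The only small bookkeeping points are verifying that $0\in\mathcal{K}$ so that it can serve as the reference element in \eqref{eq:coercivedef}, and remembering that the embedding $W^{1,p}\subset L^2$ is what lets an $L^2$-null difference in part \emph{(ii)} be promoted to equality in $\mathcal{X}$.
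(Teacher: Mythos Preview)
Your proposal is correct and follows exactly the approach the paper indicates: the paper omits the proof entirely, saying only that the lemma ``is an easy consequence of \eqref{eq:AtoQcalculation},'' and your argument is precisely the straightforward substitution of each hypothesis into that identity. The small bookkeeping points you flag (finiteness of $\|u-v\|_{L^2}$ via the embedding, and $0\in\mathcal{K}$ as the reference element for coercivity) are the only things worth mentioning, and you have handled them.
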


Inequality \eqref{eq:Qncoercive} implies \eqref{eq:Qnmonotone} with $C=0$, so \eqref{eq:Qncoercive} also implies strict-monotonicity for $A_n$ independently of $\Delta t_n$.  In fact \eqref{eq:Qnmonotone} is necessary and sufficient for monotonicity of $A_n$, while \eqref{eq:Qncoercive} is only sufficient for coercivity.  (For example, if the right side of \eqref{eq:Qncoercive} were $- c \|u-v\| \log \|u-v\|$ then $A_n$ would be coercive.)  Corollary III.1.8 of \cite{KinderlehrerStampacchia1980} now gives the following theorem.

\begin{theorem}  \label{thm:monowellposed}  Suppose $W^{1,p}(\Omega) \subset L^2(\Omega)$, $\bQ_n$ satisfies the standard flux assumptions, and $F_n=F_n(x)\in L^q(\Omega)$.  If \eqref{eq:Qncoercive} then the single time-step problem \eqref{eq:theVI} has a unique nonnegative solution $u\in\mathcal{K} \subset \mathcal{X}=W_0^{1,p}(\Omega)$.
\end{theorem}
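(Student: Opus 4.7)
The plan is to apply Corollary III.1.8 of \cite{KinderlehrerStampacchia1980}, exactly as the statement hints. That result gives existence and uniqueness of a solution $u \in \mathcal{K}$ to the VI $\ip{A(u)}{v-u} \ge 0$, $v\in\mathcal{K}$, provided that $\mathcal{K}$ is a closed convex subset of a reflexive Banach space and $A:\mathcal{K}\to\mathcal{X}'$ is strictly monotone, coercive, and continuous on finite-dimensional subspaces. So the whole proof is a checklist: verify each hypothesis for $A_n$ defined by \eqref{eq:defineAn} with $\mathcal{X}=W_0^{1,p}(\Omega)$, which is reflexive for $1<p<\infty$, and $\mathcal{K}$ defined by \eqref{eq:defineK}.

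First I would note that $\mathcal{K}$ is closed and convex by inspection of \eqref{eq:defineK}. Next, $A_n$ is well-defined as a map into $\mathcal{X}'$: by the standard flux assumption \eqref{eq:QisLq}, $\bQ_n(\grad v,v,\cdot)\in L^q(\Omega)$, so $\int \bQ_n\cdot\grad\phi$ is bounded by H\"older against $\|\phi\|_{1,p}$; and the remaining terms in \eqref{eq:defineAn} pair an $L^2$ function against $\phi\in W^{1,p}(\Omega)\subset L^2(\Omega)$, using \eqref{eq:FisLq}, $u_{n-1}\in\mathcal{K}$, and the standing embedding hypothesis $W^{1,p}(\Omega)\subset L^2(\Omega)$. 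Continuity of $A_n$ on finite-dimensional subspaces follows immediately from Lemma \ref{lem:continuous}, since the hypothesis $F_n=F_n(x)$ makes $F_n$ trivially continuous in $v$ and the flux satisfies \eqref{eq:Qiscontinuous}.

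For the two quantitative hypotheses I invoke Lemma \ref{lem:monotonecoercive}. Coercivity of $A_n$ is direct from part (iii): assumption \eqref{eq:Qncoercive} is exactly the sufficient condition stated there. For strict monotonicity I use part (ii): observe that \eqref{eq:Qncoercive} implies \eqref{eq:Qnmonotone} with $C=0<1$, since the left side of \eqref{eq:Qncoercive} is bounded above by $-c\|u-v\|_{1,p}^r \le 0 \le \frac{0}{\Delta t_n}\|u-v\|_{L^2}^2$. Thus all three hypotheses of Corollary III.1.8 of \cite{KinderlehrerStampacchia1980} hold, and we obtain a unique $u_n\in\mathcal{K}$ solving \eqref{eq:theVI}. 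Nonnegativity is automatic because $u_n\in\mathcal{K}$ by construction.

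I do not anticipate a genuine obstacle: the substantive work has already been done in Lemmas \ref{lem:continuous} and \ref{lem:monotonecoercive}, and the result of \cite{KinderlehrerStampacchia1980} supplies the abstract VI existence theorem. The only place that needs a moment of care is confirming that \eqref{eq:Qncoercive}, which is a one-sided bound and thus not obviously comparable to the two-sided-looking \eqref{eq:Qnmonotone}, actually forces strict monotonicity with $C=0$; the inequality chain above settles that cleanly.
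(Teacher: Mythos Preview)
Your proposal is correct and follows exactly the route the paper takes: the paper's proof consists solely of the remark that \eqref{eq:Qncoercive} implies \eqref{eq:Qnmonotone} with $C=0$ (hence strict monotonicity) and an appeal to Corollary III.1.8 of \cite{KinderlehrerStampacchia1980}, with Lemmas \ref{lem:continuous} and \ref{lem:monotonecoercive} supplying the hypotheses. The only imprecision is that the $F_n$ term in \eqref{eq:defineAn} pairs with $\phi$ via H\"older as $L^q$--$L^p$, not via the $L^2$ embedding, but this does not affect the argument.
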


\subsection{$p$-Laplacian fluxes} \label{subsec:plap}  We can apply Theorem \ref{thm:monowellposed} to show well-posedness in certain linear and non-linear parabolic cases.  First consider the $p$-Laplacian \cite{Evans2010} flux
\begin{equation}
  \bQ_n(\grad u) = - k |\grad u|^{p-2} \grad u \label{eq:plapflux}
\end{equation}
with $k>0$ and $1<p<\infty$, so that $\bQ_n$ satisfies the standard flux assumptions.  Formula \eqref{eq:plapflux} includes the linear (Fourier/Fick's law) flux as the $p=2$ case.  For the proofs in this subsection we assume $F_n=F_n(x)$ is independent of $u$, and we use inequalities from Appendix \ref{app:pinequalities}.  In the next four subsections we denote $\|\cdot\|$ for $\|\cdot\|_{1,p}$.

\begin{theorem}  \label{thm:plapwellposed}  If $\Omega\subset \RR^d$ is bounded, $1<p<\infty$, $F_n=F_n(x)$ is independent of $u$, and $\bQ_n$ is given by \eqref{eq:plapflux} with $k>0$, then for any $\Delta t_n>0$, \eqref{eq:theVI} has a unique solution $u\in\mathcal{K}$.
\end{theorem}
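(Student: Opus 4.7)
The plan is to apply the monotone VI framework recalled in Subsection \ref{subsec:mono}: I must verify (a) that $\bQ_n$ satisfies the standard flux assumptions of Definition \ref{ass:std}, (b) that $A_n$ is continuous on finite-dimensional subspaces, (c) that $A_n$ is strictly monotone, and (d) that $A_n$ is coercive in the sense of \eqref{eq:coercivedef}. Items (a) and (b) are routine for the flux \eqref{eq:plapflux}: continuity in $\grad v$ is immediate; because $1/p + 1/q = 1$ gives $(p-1)q = p$, we have $|\bQ_n(\grad v)|^q = k^q |\grad v|^p$, so $\bQ_n \in L^q(\Omega)$ whenever $\grad v \in L^p(\Omega)$; and $\bQ_n = 0$ wherever $\grad v = 0$, so in particular a.e.~on $E_v$. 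Lemma \ref{lem:continuous} then supplies (b).

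For coercivity, I would pick $\phi = 0 \in \mathcal{K}$ in \eqref{eq:coercivedef} rather than route through Lemma \ref{lem:monotonecoercive}, whose hypothesis \eqref{eq:Qncoercive} is delicate for $p<2$. Since $\bQ_n(0) = 0$, a direct computation from \eqref{eq:defineAn} yields
\begin{equation*}
\ip{A_n(u) - A_n(0)}{u} = \|u\|_{L^2}^2 + k\,\Delta t_n \int_\Omega |\grad u|^p \,\ge\, k\,\Delta t_n\, \|\grad u\|_{L^p}^p.
\end{equation*}
The Poincar\'e inequality on $W_0^{1,p}(\Omega)$ provides a constant $c_0 > 0$ with $\|\grad u\|_{L^p} \ge c_0 \|u\|_{1,p}$, so the ratio in \eqref{eq:coercivedef} is bounded below by a constant multiple of $\|u\|_{1,p}^{p-1}$, which tends to $+\infty$ because $p > 1$.

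For strict monotonicity, calculation \eqref{eq:AtoQcalculation} gives
\begin{equation*}
\ip{A_n(u) - A_n(v)}{u-v} = \|u-v\|_{L^2}^2 + k\,\Delta t_n \int_\Omega \bigl(|\grad u|^{p-2}\grad u - |\grad v|^{p-2}\grad v\bigr) \cdot \grad(u-v).
\end{equation*}
The integrand in the second term is pointwise nonnegative by the elementary vector monotonicity of the $p$-Laplacian, valid for every $1 < p < \infty$ and recorded in Appendix \ref{app:pinequalities}, so the pairing is nonnegative. If it vanishes, the integrand vanishes almost everywhere, forcing $\grad u = \grad v$ a.e.; since $u - v \in W_0^{1,p}(\Omega)$, Poincar\'e's inequality then gives $u = v$.

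The main obstacle I anticipate is the sub-quadratic range $1 < p < 2$, where the sharp monotonicity inequality $(|a|^{p-2}a - |b|^{p-2}b) \cdot (a-b) \ge (p-1)\,|a-b|^2(|a|+|b|)^{p-2}$ degenerates where $|a|+|b|$ vanishes and, after Hölder with exponents $2/p$ and $2/(2-p)$, gives only a bound of the form $\|\grad(u-v)\|_{L^p}^2 / (\|\grad u\|_{L^p} + \|\grad v\|_{L^p})^{2-p}$, which is not in the form required by \eqref{eq:Qncoercive}. The workaround is precisely the choice $\phi = 0$ above: it turns the gradient integral into the clean expression $\|\grad u\|_{L^p}^p$ and sidesteps any Clarkson-style manipulation, so a single uniform argument covers all $p \in (1,\infty)$. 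With (a)--(d) in hand, Corollary~III.1.8 of \cite{KinderlehrerStampacchia1980} yields the unique $u \in \mathcal{K}$ solving \eqref{eq:theVI} for every $\Delta t_n > 0$.
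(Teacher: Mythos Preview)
Your proof is correct and in fact cleaner than the paper's. The paper splits into cases: for $p\ge 2$ it verifies the sufficient condition \eqref{eq:Qncoercive} via Lemma~\ref{lem:pbiginequality} and then invokes Theorem~\ref{thm:monowellposed}; for $1<p<2$ it cannot obtain \eqref{eq:Qncoercive}, so it works directly from \eqref{eq:AtoQcalculation}, combining Lemmas~\ref{lem:psmallinequality} and~\ref{lem:smallpbound} to produce a lower bound of the form $\|u-v\|_{L^2}^2 + B\,\|u-v\|^2/\big\||\grad u|+|\grad v|\big\|_{L^p}^{2-p}$, and extracts coercivity from the second term with $v$ fixed. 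Your choice $\phi=0$ in \eqref{eq:coercivedef} collapses the gradient integral to $k\,\Delta t_n\|\grad u\|_{L^p}^p$ and, with Poincar\'e, gives a lower bound $c\,\|u\|_{1,p}^{p-1}$ for the coercivity ratio uniformly in $p\in(1,\infty)$, so no case distinction or H\"older manipulation is needed. What the paper's $p\ge 2$ route buys is the two-point bound \eqref{eq:Qncoercive}, which is a genuine strong-monotonicity estimate (useful, e.g., for Lipschitz dependence on data); your argument establishes only what is required for existence and uniqueness. One minor remark: for strict monotonicity you argue via vanishing of the gradient integrand, but the term $\|u-v\|_{L^2}^2=0$ already forces $u=v$ directly, so the Appendix inequalities are not even needed there.
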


\begin{proof} If $p\ge 2$ then by \eqref{eq:pbiginequality} and \eqref{eq:poincare} there is $C=C(\Omega,p)$ so that
    $$\int_\Omega \left(\bQ_n(\grad u) - \bQ_n(\grad v)\right)\cdot (\grad u - \grad v) \le - \frac{k}{2^{p-2}} \int_\Omega |\grad u - \grad v|^p \le - \frac{k}{C 2^{p-2}} \|u-v\|^p$$
and thus \eqref{eq:Qncoercive} holds with $r=p$.  However, if  $1<p<2$ then we have to work harder.  Coercivity can be shown, but not via \eqref{eq:Qncoercive}.  Using \eqref{eq:psmallinequality}, \eqref{eq:smallpbound}, and \eqref{eq:poincare} gives
\begin{align*}
\ip{A_n(u) - A_n(v)}{u-v} &\ge \|u-v\|_{L^2}^2 + \Delta t_n\,k (p-1) \int_\Omega \frac{|\grad u - \grad v|^2}{\left(|\grad u|+|\grad v|\right)^{2-p}} \\
  &\ge \|u-v\|_{L^2}^2 + \Delta t_n\,k (p-1) \frac{\|\grad u - \grad v\|_{L^p}^2}{\big\||\grad u|+|\grad v|\big\|_{L^p}^{2-p}} \\
  &\ge \|u-v\|_{L^2}^2 + B\, \frac{\|u - v\|^2}{\big\||\grad u|+|\grad v|\big\|_{L^p}^{2-p}}
\end{align*}
where $B = \Delta t_n\,k (p-1) C(\Omega,p)^{-2/p} >0$.  This shows $\ip{A_n(u) - A_n(v)}{u-v} \ge \|u-v\|_{L^2(\Omega)}^2$, thus $A_n$ is strictly-monotone.  Fixing $v$ such that $\|\grad v\|_{L^p} >0$, we have
\begin{equation*}
\frac{\ip{A_n(u) - A_n(v)}{u-v}}{\|u-v\|} \ge B\, \frac{\|u - v\|}{\big\||\grad u|+|\grad v|\big\|_{L^p}^{2-p}} \to \infty
\end{equation*}
as $\|u\|\to\infty$, because $0<2-p<1$, and thus $A_n$ is coercive. \end{proof}

\subsection{Doubly-nonlinear fluxes} \label{subsec:powertransform}  Now consider the flux formula
\begin{equation}
  \bQ_n(\grad u,u) = - k u^r |\grad u|^{p-2} \grad u \label{eq:doubleflux}
\end{equation}
where $k>0$, $r\ge 0$, and $1<p<\infty$.  This includes, as the $r=0$ case, the $p$-Laplacian \eqref{eq:plapflux}, but it also includes the porous medium equation \cite{Vazquez2007}, where $p=2$, $r=\gamma-1$, and thus $\bQ_n = - k \gamma \grad(u^\gamma)$.  The flux for the diffusive shallow water equations \cite{AlonsoSantillanaDawson2008}, which has nontrivial powers $1<r<2$ and $1<p\le 2$, is also included, and the flat-bed shallow ice approximation \cite{CalvoDuranyVazquez2000} flux with $r=n+2$ and $p=n+1$ for $n>1$.

Leaving the function space undetermined for a moment, we apply a power transformation $u = w^m$ where $m = (p-1)/(r+p-1)$ \cite{Raviart1970} so that $0 < m \le 1$.  Straightforward calculation turns \eqref{eq:doubleflux} into
\begin{equation}
	\bQ_n = - K |\grad w|^{p-2} \grad w, \label{eq:doublenewflux}
\end{equation}
with $K=k m^{p-1}>0$, giving the $p$-Laplacian flux \eqref{eq:plapflux}.  This transformation converts PDE \eqref{eq:semimassconserve} into a $p$-Laplacian equation, but with additional zeroth-order terms,
\begin{equation}
    - \Div\left(K|\grad w|^{p-2} \grad w\right) + G(w,x) = 0  \label{eq:doubleplap}
\end{equation}
where $G(w,x) = w^m - \Delta t_n\, F_n(w^m,x) - u_{n-1}$.  In the porous media $p=2$ case equation \eqref{eq:doubleplap} is semilinear.

Define $\mathcal{X} = W_0^{1,p}(\Omega)$ and $\mathcal{K} =\left\{u\ge 0\right\} \subset \mathcal{X}$ as before.  Define $A_n: \mathcal{K} \to \mathcal{X}'$ by
\begin{equation}
\ip{A_n(w)}{\phi} = \int_\Omega \Delta t_n\, K |\grad w|^{p-2} \grad w\cdot \grad \phi + G(w,x)\phi. \label{eq:doubleform}
\end{equation}
The weak formulation of \eqref{eq:doubleplap} is VI \eqref{eq:theVI} but with \eqref{eq:doubleform} defining $A_n$.  The following Theorem uses the argument in subsection III.3 of \cite{KinderlehrerStampacchia1980}.

\begin{theorem}
Let $1<p<\infty$, $r\ge 0$, and define $m = (p-1)/(r+p-1)$.  Suppose $G(w,x)=w^m - \Delta t_n\, F_n(w^m,x) - u_{n-1}$ is in $\mathcal{X}'$ for all $w\in\mathcal{K}$, and that $G$ is nondecreasing in $w$.  Then $A_n$ in \eqref{eq:doubleform} is strictly monotone and coercive, and thus \eqref{eq:theVI} has a unique solution $u\in\mathcal{K}$.
\end{theorem}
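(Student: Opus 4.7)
The plan is to apply \cite[Corollary III.1.8]{KinderlehrerStampacchia1980} once again, just as in Theorem \ref{thm:monowellposed}, which requires verifying strict monotonicity, coercivity, and continuity on finite-dimensional subspaces for the operator $A_n$ in \eqref{eq:doubleform}. The key structural observation is that $A_n$ splits into a pure $p$-Laplacian term in $\grad w$ and a zeroth-order term $G(w,x)$ which, by hypothesis, is nondecreasing in $w$. This monotonicity of $G$ is what makes the doubly-nonlinear case cleaner than Theorem \ref{thm:plapwellposed}, where the $L^2$ contribution from the linear $v$-piece in $A_n$ had to carry most of the burden.

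For \emph{strict monotonicity} I would compute
\begin{align*}
\ip{A_n(u) - A_n(v)}{u-v} &= \Delta t_n\, K \int_\Omega \bigl(|\grad u|^{p-2}\grad u - |\grad v|^{p-2}\grad v\bigr)\cdot \grad(u-v) \\
&\qquad + \int_\Omega \bigl(G(u,x)-G(v,x)\bigr)(u-v).
\end{align*}
The second integrand is nonnegative pointwise because $G(\cdot,x)$ is nondecreasing. The first integrand is nonnegative by the pointwise $p$-Laplacian inequalities of Appendix \ref{app:pinequalities}, namely \eqref{eq:pbiginequality} for $p\ge 2$ and \eqref{eq:psmallinequality} for $1<p<2$; in both regimes the inequality is strict wherever $\grad u \ne \grad v$ and $|\grad u|+|\grad v|>0$, while $\grad u=\grad v$ holds trivially on the complementary set. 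Hence if the pairing vanishes then $\grad u=\grad v$ a.e., and because $u-v\in W_0^{1,p}(\Omega)$ Poincar\'e's inequality forces $u=v$.

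For \emph{coercivity} I would pick the reference point $\phi=0\in\mathcal{K}$. Since $\grad 0=0$, the diffusion contribution to $\ip{A_n(u)-A_n(0)}{u}$ reduces to $\Delta t_n K\int_\Omega |\grad u|^p$. The zeroth-order contribution $\int_\Omega (G(u,x)-G(0,x))u$ is nonnegative because $u\ge 0$ and $G(u,x)\ge G(0,x)$ by monotonicity of $G$. Poincar\'e's inequality then gives
$$\ip{A_n(u)-A_n(0)}{u-0}\ \ge\ \Delta t_n\, K\, \|\grad u\|_{L^p}^p\ \ge\ c\,\|u\|_{1,p}^p,$$
so the coercivity ratio is bounded below by $c\,\|u\|_{1,p}^{p-1}\to\infty$ for every $p>1$, without case-splitting. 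Continuity on finite-dimensional subspaces follows from \eqref{eq:Qiscontinuous} applied to the flux \eqref{eq:doublenewflux} together with continuity of $G(\cdot,x)$ in $w$ (implicit in its $\mathcal{X}'$-valuedness), exactly as in Lemma \ref{lem:continuous}. The only step I expect to require genuine care is strict monotonicity in the regime $1<p<2$, where the pointwise inequality \eqref{eq:psmallinequality} is weighted by $(|\grad u|+|\grad v|)^{2-p}$ and one must handle the set $\{|\grad u|+|\grad v|=0\}$ separately; this is the same issue that arose in Theorem \ref{thm:plapwellposed}, so no fundamentally new technique is needed.
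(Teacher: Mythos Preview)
Your argument is correct and uses the same core ingredients as the paper---the pointwise $p$-Laplacian inequalities \eqref{eq:pbiginequality}, \eqref{eq:psmallinequality}, Poincar\'e, and the monotonicity of $G$---but your handling of coercivity is slightly cleaner. The paper establishes, for $p\ge 2$, the uniform lower bound $\ip{A_n(w)-A_n(v)}{w-v}\ge c\|w-v\|^p$ (via \eqref{eq:pbiginequality} and \eqref{eq:poincare}), which gives strict monotonicity and coercivity simultaneously, and then for $1<p<2$ it simply refers back to the more delicate argument of Theorem~\ref{thm:plapwellposed}. Your choice of the reference point $\phi=0\in\mathcal{K}$ for coercivity is a genuine shortcut: because $\grad 0=0$ the cross terms vanish, $\int_\Omega(G(u,x)-G(0,x))u\ge 0$ follows directly from $u\ge 0$ and the monotonicity of $G$, and the remaining $\Delta t_n K\int_\Omega|\grad u|^p$ gives $\|u\|^{p-1}$ growth uniformly in $1<p<\infty$ without any case split. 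The paper's route buys the stronger two-point estimate (useful if one later wants quantitative stability in $p\ge 2$), while yours buys brevity and avoids invoking the $1<p<2$ machinery of \eqref{eq:smallpbound} for coercivity.
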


\begin{proof}
Suppose $p\ge 2$.  If $w,v\in\mathcal{X}$ then by \eqref{eq:pbiginequality} and Poincare inequality \eqref{eq:poincare},
\begin{align*}
\ip{A_n(w)-A_n(v)}{w-v} &= \int_\Omega \Delta t_n\, K \left(|\grad w|^{p-2} \grad w - |\grad v|^{p-2} \grad v\right) \cdot \grad (w-v) \\
  &\qquad\qquad + \left(G(w,x) - G(v,x)\right) (w-v) \\
  &\ge \frac{\Delta t_n\,K}{2^{p-2}} \int_\Omega |\grad (w-v)|^p + 0 \ge \frac{\Delta t_n\,K}{2^{p-2}\, C(\Omega,p)} \|w-v\|^p.
\end{align*}
The case $1<p<2$ follows by modification of the argument in Theorem \ref{thm:plapwellposed}.
\end{proof}

\subsection{Advection by a differentiable velocity field} \label{subsec:advect}  The flux in certain applications (ice shelves, sea ice) is understood to be dominantly advective.  In fact the velocity arises from solving a coupled momentum balance, but here we simply assume the layer is transported by a differentiable velocity field $\bX \in W^{1,\infty}(\Omega;\RR^d)$,
\begin{equation}
  \bQ_n(u,x) = \bX(x) u. \label{eq:advectflux}
\end{equation}
If $1<p<\infty$ then $\bQ_n$ satisfies the standard flux assumptions on $W^{1,p}(\Omega)$.

Suppose $u,v\in\mathcal{K}$.  Noting $u=v=0$ on $\partial \Omega$, integration-by-parts shows
\begin{align}
   &\int_\Omega \left[\bQ_n(u,x) - \bQ_n(v,x)\right] \cdot \grad (u - v) = \int_\Omega \bX (u-v) \cdot \grad (u - v)   \label{eq:advectQnmono} \\
   &\qquad\qquad = \frac{1}{2}\,\int_\Omega \bX \cdot \grad\left[ (u - v)^2 \right] = - \frac{1}{2}\,\int_\Omega \left(\Div\bX\right) (u - v)^2. \notag
\end{align}
Equation \eqref{eq:advectQnmono} can be exploited in a couple of ways.  If the vector field is divergent $\Div\bX\ge 0$ then \eqref{eq:Qnmonotone} applies with $C=0$ and so $A_n$ is strictly monotone.  Otherwise, \eqref{eq:Qnmonotone} applies with $C = \frac{1}{2}\Delta t_n\,\|(\Div\bX)_{-}\|_{L^\infty(\Omega)}$, and then $A_n$ is monotone if $C \le 1$ (strictly if $C < 1$).

Consider the operator $A_n$ defined by \eqref{eq:defineAn} using flux \eqref{eq:advectflux}.  Unfortunately, there is no reason to suppose this operator is coercive, so we add a bit of diffusion in the form of a $p$-Laplacian leading-order term with coefficient $\eps>0$, namely
\begin{equation}
  \bQ_n(\grad u,u,x) = -\eps |\grad u|^{p-2} \grad u + \bX(x) u.   \label{eq:plapadvectflux}
\end{equation}

\begin{theorem}  \label{thm:plapadvectwellposed}  Suppose $\bQ_n$ is given by \eqref{eq:plapadvectflux} with $\eps>0$ and $p\ge 2$.  If $\bX \in W^{1,\infty}(\Omega;\RR^d)$ and $F_n=F_n(x)$ is independent of $u$ then \eqref{eq:theVI} has a unique solution $u\in\mathcal{K}$ if either $\Div \bX \ge 0$ or
\begin{equation}
  \Delta t_n \le \frac{2}{\|(\Div \bX)_-\|_{L^\infty}}. \label{eq:plapadvectdtcond}
\end{equation}
\end{theorem}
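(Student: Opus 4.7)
The plan is to invoke the same monotone variational inequality framework used in Theorem \ref{thm:monowellposed}, namely Corollary III.1.8 of \cite{KinderlehrerStampacchia1980}, by verifying three ingredients for the operator $A_n$ from \eqref{eq:defineAn}: continuity on finite-dimensional subspaces, strict monotonicity, and coercivity. Continuity on finite-dimensional subspaces is immediate from Lemma \ref{lem:continuous}, since the flux $\bQ_n(\bX,v,x) = -\eps|\bX|^{p-2}\bX + \bX(x)\,v$ is jointly continuous in $(\bX,v)$. The standard flux assumptions of Definition \ref{ass:std} are also routine: the advective part $\bX\,v$ lies in $L^q(\Omega)$ because $\bX\in L^\infty$ and $v\in L^p\hookrightarrow L^q$ (using that $\Omega$ is bounded), and $\bQ_n(\grad v,v,x)=0$ wherever $v=0$ because $\grad v=0$ there as well.

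The key computation uses \eqref{eq:AtoQcalculation}, which applies because $F_n$ is independent of $u$. Splitting the flux into its $p$-Laplacian and advective parts, applying the $p$-Laplacian inequality \eqref{eq:pbiginequality} (valid since $p\ge 2$) to the diffusive contribution, and using the integration-by-parts identity \eqref{eq:advectQnmono} on the advective contribution, I would obtain a lower bound of the form
\begin{equation*}
\ip{A_n(u)-A_n(v)}{u-v} \ge \|u-v\|_{L^2}^2 + \frac{\Delta t_n\,\eps}{2^{p-2}}\,\|\grad(u-v)\|_{L^p}^p + \frac{\Delta t_n}{2}\int_\Omega (\Div\bX)(u-v)^2.
\end{equation*}
If $\Div\bX\ge 0$ the last term is already non-negative. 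Otherwise one bounds it below by $-\tfrac{\Delta t_n}{2}\|(\Div\bX)_-\|_{L^\infty}\|u-v\|_{L^2}^2$, and the hypothesis \eqref{eq:plapadvectdtcond} is exactly what forces the coefficient of $\|u-v\|_{L^2}^2$ to remain non-negative after this absorption.

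Strict monotonicity then follows because the $p$-Laplacian contribution $\tfrac{\Delta t_n\,\eps}{2^{p-2}}\,\|\grad(u-v)\|_{L^p}^p$ is strictly positive whenever $u\ne v$ in $\mathcal{X}=W_0^{1,p}(\Omega)$, by the Poincare inequality \eqref{eq:poincare}. For coercivity I would take $\phi=0\in\mathcal{K}$ in definition \eqref{eq:coercivedef}; the same lower bound, combined with Poincare to upgrade $\|\grad u\|_{L^p}^p$ to $c\,\|u\|_{1,p}^p$, yields $\ip{A_n(u)-A_n(0)}{u}/\|u\|_{1,p} \ge c\,\|u\|_{1,p}^{p-1}$, which tends to $+\infty$ as $\|u\|_{1,p}\to\infty$ because $p-1\ge 1$.

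The main obstacle is that the advective flux cannot be absorbed into the sufficient condition \eqref{eq:Qncoercive} of Lemma \ref{lem:monotonecoercive}(iii): integration by parts converts its contribution to a multiple of $\|u-v\|_{L^2}^2$ rather than a negative multiple of $\|u-v\|_{1,p}^r$, and when $\Div\bX$ has a negative part this contribution carries the \emph{wrong} sign. Consequently one cannot simply cite Theorem \ref{thm:monowellposed}; strict monotonicity and coercivity must be established directly from the combined estimate above, with the time-step restriction \eqref{eq:plapadvectdtcond} paid as the price of allowing velocity fields whose divergence is not everywhere non-negative.
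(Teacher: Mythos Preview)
Your proposal is correct and follows essentially the same route as the paper: split the flux into its $p$-Laplacian and advective parts, apply \eqref{eq:pbiginequality} and \eqref{eq:advectQnmono} respectively to get the combined lower bound, then use the time-step condition \eqref{eq:plapadvectdtcond} (or $\Div\bX\ge 0$) to keep the $L^2$ coefficient nonnegative so that the surviving $p$-Laplacian term, upgraded via Poincar\'e, delivers both strict monotonicity and coercivity. Your closing remark that \eqref{eq:Qncoercive} cannot absorb the advective contribution, so Theorem~\ref{thm:monowellposed} is not directly citable, is a useful clarification the paper leaves implicit.
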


\begin{proof} Recalling the proof of Theorem \ref{thm:plapwellposed}, equation \eqref{eq:advectQnmono} gives
\begin{equation*}
\int_\Omega \left[\bQ_n(u,x) - \bQ_n(v,x)\right] \cdot \grad (u - v) \le - \eps c_0 \|u-v\|^p - \frac{1}{2} \int_\Omega (\Div\bX) (u-v)^2
\end{equation*}
where $c_0=2^{2-p}/C(\Omega,p)>0$.  If $c_1 = \eps c_0 \Delta t_n$ then
\begin{align*}
\ip{A_n(u) - A_n(v)}{u-v} &\ge c_1\, \|u-v\|^p + \frac{\Delta t_n}{2} \int_\Omega (\Div\bX) (u-v)^2 + \|u-v\|_{L^2}^2 \\
   &\ge c_1\, \|u-v\|^p + \left(1 - \frac{\Delta t_n}{2} \|(\Div \bX)_-\|_{L^\infty}\right) \|u-v\|_{L^2}^2,
\end{align*}
thus $A_n$ is coercive under either hypothesis. \end{proof}

Condition \eqref{eq:plapadvectdtcond} is independent of $\eps>0$, suggesting that the pure advection problem ($\eps = 0$) may also be well-behaved, but our monotone VI technique in $\mathcal{K} \subset W^{1,p}$ does not establish it.  Note that bound \eqref{eq:plapadvectdtcond} might be regarded as a CFL-type condition \cite{LeVeque2002}, but it measures the \emph{convergence} of the velocity field, not its magnitude.  If $\|(\Div \bX)_-\|_{L^\infty}$ is small then large time steps are well-posed.

\subsection{Non-local dependence through an integral kernel} \label{subsec:nonlocal}   The examples so far compute the flux $\bQ_n$ at $x\in\Omega$ using only the values $u(x)$ and $\grad u(x)$.  However, the flux in realistic models often comes from solving coupled differential equations, generally including momentum conservation.  In that context the flux is non-locally determined from the layer thickness $u$ and its spatial derivatives.

Let $\mathcal{X} = W_0^{1,2}(\Omega)$, a Hilbert space, and $\mathcal{K}=\{u\in\mathcal{X}|u\ge 0\}$.  Suppose $G_1(x,y)$, \dots, $G_d(x,y)$ and $K(x,y)$ are scalar, real-valued kernel functions in $L^2(\Omega\times \Omega)$.  Define
\begin{equation}
\bQ_n[\grad u,u](x) = \int_\Omega \bG(x,y) u(y)\,dy - \int_\Omega K(x,y) \grad u(y)\,dy, \label{eq:nonlocaldefQn}
\end{equation}
where $\bG(x,y) = (G_1(x,y), \dots, G_d(x,y))$ is $\RR^d$-valued.  With flux \eqref{eq:nonlocaldefQn}, equation \eqref{eq:semimassconserve} is no longer a PDE, but rather a linear integro-differential equation \cite{PorterStirling1990}.

Let $A_n:\mathcal{K} \to \mathcal{X}'=\mathcal{X}$ be defined by \eqref{eq:defineAn}, with $\bQ_n$ from \eqref{eq:nonlocaldefQn}, that is
\begin{align}
    \ip{A_n(v)}{\phi} &= \int_\Omega \bigg[(v - \Delta t_n\,F_n - u_{n-1})\phi - \Delta t_n\,\left(\int_\Omega \bG(\cdot,y) v(y)\,dy\right)\cdot \grad \phi \label{eq:nonlocaldefnAn} \\
                      &\qquad\qquad + \Delta t_n\,\left(\int_\Omega K(\cdot,y) \grad v(y)\,dy\right)\cdot \grad \phi\,\bigg]. \notag
\end{align}

\begin{theorem}  \label{thm:nonlocalwellposed}  Suppose $F_n=F_n(x) \in L^2(\Omega)$ is independent of $u$.  Assume $G_i \in L^2(\Omega\times\Omega)$ for $i=1,\dots,d$.  Suppose $K \in L^2(\Omega\times\Omega)$ is positive and bounded below in the sense that there is $\delta>0$ so that
\begin{equation}
   \int_\Omega \int_\Omega K(x,y) \phi(x) \phi(y)\,dx\,dy \ge \delta \|\phi\|_{L^2}^2 \qquad \text{for all } \phi \in L^2(\Omega).  \label{eq:nonlocalKpos}
\end{equation}
If either $\bG=0$ or
\begin{equation}
  \Delta t_n < \frac{\delta}{C(\Omega,p)\, \|\bG\|_{L^2}},  \label{eq:nonlocaldtcond}
\end{equation}
where $C(\Omega,p)$ is from the Poincar\'e \eqref{eq:poincare}, then $A_n$ defined by \eqref{eq:nonlocaldefnAn} is coercive and strictly monotone, and thus \eqref{eq:theVI} has a unique solution $u\in\mathcal{K}$.
\end{theorem}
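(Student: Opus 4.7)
The plan is to verify the hypotheses of the monotone-VI existence theorem (Corollary III.1.8 of \cite{KinderlehrerStampacchia1980}) invoked in Subsection \ref{subsec:mono}: $A_n$ must be continuous on finite-dimensional subspaces, strictly monotone, and coercive. Continuity is immediate from Lemma \ref{lem:continuous}, since the non-local flux \eqref{eq:nonlocaldefQn} is linear (hence continuous) in $(\grad v, v)$, and an application of Cauchy--Schwarz using the $L^2(\Omega\times\Omega)$ hypotheses on $\bG$ and $K$ places $\bQ_n(\grad v, v, x) \in L^2(\Omega)$ whenever $v \in W^{1,2}(\Omega)$. All the real content therefore lies in the monotonicity/coercivity estimate.

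The key computation is $\ip{A_n(u) - A_n(v)}{u - v}$. The structural observation is that $A_n$ from \eqref{eq:nonlocaldefnAn} is \emph{affine} in $v$---the two kernel integrals are linear, and $F_n = F_n(x)$ and $u_{n-1}$ are constant in $v$---so setting $w = u - v$ the constant pieces cancel, leaving
\begin{align*}
\ip{A_n(u) - A_n(v)}{w} &= \|w\|_{L^2}^2 \,-\, \Delta t_n\!\int_\Omega\!\left(\int_\Omega \bG(x,y)\,w(y)\,dy\right)\!\cdot \grad w(x)\,dx \\
 &\qquad\qquad +\, \Delta t_n\!\int_\Omega\!\left(\int_\Omega K(x,y)\,\grad w(y)\,dy\right)\!\cdot \grad w(x)\,dx.
\end{align*}
For the diffusion piece, because $K$ is a scalar kernel it acts componentwise on the vector-valued argument $\grad w$, so the double integral rewrites as $\sum_{i=1}^d \int_\Omega\!\int_\Omega K(x,y)\,\partial_i w(y)\,\partial_i w(x)\,dy\,dx$; applying hypothesis \eqref{eq:nonlocalKpos} with $\phi = \partial_i w$ in each summand and summing produces the lower bound $\Delta t_n\,\delta\,\|\grad w\|_{L^2}^2$. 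For the advection piece, iterated Cauchy--Schwarz yields $\|\!\int \bG(\cdot, y)\,w(y)\,dy\|_{L^2} \le \|\bG\|_{L^2}\,\|w\|_{L^2}$, and the Poincar\'e inequality \eqref{eq:poincare} (with $p = 2$) controls $\|w\|_{L^2}$ by $C(\Omega, p)\,\|\grad w\|_{L^2}$, so the advective contribution is bounded in absolute value by $\Delta t_n\,C(\Omega, p)\,\|\bG\|_{L^2}\,\|\grad w\|_{L^2}^2$.

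Under either alternative of the hypothesis---$\bG = 0$ trivially, or \eqref{eq:nonlocaldtcond}---the diffusive and advective $\|\grad w\|_{L^2}^2$ contributions balance to produce a bound of the form $\ip{A_n(u) - A_n(v)}{w} \ge c_1 \|w\|_{L^2}^2 + c_2 \|\grad w\|_{L^2}^2 \ge \min(c_1, c_2)\,\|w\|_{1,2}^2$ with $c_1, c_2 > 0$. Strict monotonicity is then immediate, and coercivity \eqref{eq:coercivedef} (taking $\phi = 0 \in \mathcal{K}$) follows from this quadratic lower bound. Corollary III.1.8 of \cite{KinderlehrerStampacchia1980} delivers the unique solution $u \in \mathcal{K}$. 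The main obstacle is pure constant-bookkeeping: tracking the Cauchy--Schwarz and Poincar\'e constants precisely so that hypothesis \eqref{eq:nonlocaldtcond} is exactly the threshold at which the advective estimate sits strictly below the diffusive contribution. The one subtle step on the way there is the componentwise application of \eqref{eq:nonlocalKpos}, which is stated only for scalar $\phi$, to handle the vector-valued diffusion convolution.
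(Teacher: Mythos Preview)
Your approach is essentially the same as the paper's: both compute the quadratic form $\ip{A_n(u)-A_n(v)}{u-v}$ using the affine structure of $A_n$, bound the $\bG$-term via iterated Cauchy--Schwarz, apply the positivity hypothesis \eqref{eq:nonlocalKpos} componentwise to the $K$-term, and invoke Poincar\'e \eqref{eq:poincare}. The only cosmetic difference is where Poincar\'e enters---the paper uses it to convert the diffusive lower bound $\delta\,\|\grad\phi\|_{L^2}^2$ into a full $\|\phi\|_{1,2}^2$ bound, whereas you apply it to the advective estimate---and your remark about the componentwise use of \eqref{eq:nonlocalKpos} for the vector-valued $\grad w$ is exactly the step the paper leaves implicit.
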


\begin{proof}  Let $\phi=u-v$ for $u,v\in\mathcal{K}$.  Two applications of Cauchy-Schwarz yield
\begin{equation}
\left|\int_\Omega \int_\Omega \bG(x,y) \cdot \grad \phi(x)\,\phi(y) \,dx\,dy\right|
  \le \|\bG\|_{L^2} \|\phi\|^2,   \label{eq:nonlocalGbound}
\end{equation}
By \eqref{eq:nonlocalGbound}, \eqref{eq:nonlocalKpos}, and \eqref{eq:poincare},
\begin{align*}
\ip{A_n(u)-A_n(v)}{\phi} &\ge \|\phi\|_{L^2}^2 - \Delta t_n\,\int_\Omega \int_\Omega \bG(x,y) \cdot \grad \phi(x) \phi(y)\,dx\,dy \\
    &\qquad + \Delta t_n\,\int_\Omega \int_\Omega K(x,y) \grad \phi(x) \cdot \grad \phi(y)\,dx\,dy \\
    &\ge \|\phi\|_{L^2}^2 - \Delta t_n\,\|\bG\|_{L^2} \|\phi\|^2 + \delta \|\grad\phi\|_{L^2}^2 \\
    &\ge \|\phi\|_{L^2}^2 + \left(\frac{\delta}{C(\Omega,p)} - \Delta t_n\,\|\bG\|_{L^2}\right) \|\phi\|^2.
\end{align*}
The result follows from condition \eqref{eq:nonlocaldtcond} and the definition of coercivity. \end{proof}

Theorems \ref{thm:plapadvectwellposed} and \ref{thm:nonlocalwellposed} take different approaches to coercivity.  The former assumes the velocity $\bX$ is differentiable so integration-by-parts gives a time-step criterion based on derivatives of $\bX$.  Theorem \ref{thm:nonlocalwellposed} instead assumes only that $\bG$ is integrable, no integration-by-parts is attempted, and $\Delta t_n$ is bounded using the norm of $\bG$ itself.

\subsection{Explicit time-steps} \label{subsec:explicit}   Suppose $\bq$ is any flux such that, when using the $\theta$-method \eqref{eq:thetamethod} in an implicit case (i.e.~with $\theta>0$), problem \eqref{eq:theVI} is well-posed in $\mathcal{X}=W_0^{1,p}(\Omega)$.  (For example, consider the fluxes in Subsections \ref{subsec:plap} or \ref{subsec:powertransform}.)  Compare the explicit problem, namely a forward Euler step with $\theta=0$, thus $\bQ_n=0$ and
\begin{equation}
F_n = - \Div \bq(\grad u_{n-1},u_{n-1},x) + f(u_{n-1},x).  \label{eq:Fnformexplicit}
\end{equation}
Problem \eqref{eq:theVI} now seeks $u\in\mathcal{K}$ so that
\begin{equation}
\ip{A_n(u)}{\phi} = \int_\Omega (u - \Delta t_n\,F_n - u_{n-1})\phi \ge 0 \quad \text{ for all } \phi \in \mathcal{K}.  \label{eq:explicitweakstep}
\end{equation}
For \eqref{eq:explicitweakstep} to be well-posed the previous state $u_{n-1}$ must be regular enough so that $F_n$ in \eqref{eq:Fnformexplicit} is well defined, that is, $\Div \bq(\grad u_{n-1},u_{n-1},x) \in \mathcal{X}'$ and thus $F_n\in\mathcal{X}'$.  However, even if this holds, VI \eqref{eq:explicitweakstep} is not coercive on $\mathcal{X}=W_0^{1,p}(\Omega)$.

On the other hand, \eqref{eq:explicitweakstep} is well-posed in $\{u_n\ge 0\,\big|\,u_n\in L^2(\Omega)\}$.  The solution is by truncation \cite[page 27]{KinderlehrerStampacchia1980}:
\begin{equation}
u_n = \max\{0,u_{n-1} + \Delta t_n\,F_n\} \in L^2(\Omega). \label{eq:explicittruncation}
\end{equation}
This addresses one time step, but unfortunately $u_n \in L^2(\Omega)$ is not regular enough so that the next timestep has a well-defined weak form.  That is, generally $\Div \bq(\grad u_{n-1},u_{n-1},x)$ need not be in $L^2(\Omega)$.

In summary, for explicit time steps the solution to a single weakly-posed time step is straightforward truncation \eqref{eq:explicittruncation}, but the result is generally not regular enough to yield a well-posed problem at future steps, at least in our discrete-time, continuous-space formulation.  Nonetheless most existing numerical models \cite[for example]{Winkelmannetal2011} proceed by explicit time steps for the fully-discretized problem, followed by truncation where the computed thicknesses come out negative.

\section{Mass conservation and the retreat set}  \label{sec:timeseries}

From now on we assume that the weak problem \eqref{eq:theVI} for a single time-step is well-posed, and that the solutions $u_n\in \mathcal{K}$ are sufficiently-regular so that strong form statements \eqref{eq:semimassconserve} and \eqref{eq:strongconditionwherezero} also hold as described in Theorem \ref{thm:weakimpliesstrong}.  Define
\begin{equation}
M_n = \int_\Omega u_n(x)\,dx \ge 0, \label{eq:totalmassseries}
\end{equation}
the \emph{(total) mass} at time $t_n$.  Recalling set decomposition \eqref{eq:omegadecomposition}, define the \emph{climate input} at time step $n$ as
\begin{equation}
C_n = \Delta t_n\, \int_{\Omega_n} F_n(u_n,x) \label{eq:climateseries}
\end{equation}
Note that we sum values of the source term $F_n(u_n,x)$ only over locations where the fluid is present at $t=t_n$; this is the climate input into the fluid layer.  In the complement $\Omega \setminus \Omega_n = \Omega_n^r \cup \Omega_n^{00}$ the (nonpositive) climate $F_n$ is not removing fluid \emph{at} time $t_n$, though the fluid in $\Omega_n^r$ was completely removed during the time step $[t_{n-1},t_n]$.

Practical models will compute approximations to time-series $M_n$ and $C_n$, or similar, as model outputs, in order to audit mass transfers to and from the fluid layer.  For \emph{fixed}-boundary fluid-layer problems exact discrete mass conservation can be achieved in the sense that
\begin{equation}
M_n = M_{n-1} + C_n \qquad \text{if } \Omega_n = \Omega, \label{eq:oldbalance}
\end{equation}
to within rounding error at each time $t_n$.  For example, if $\Omega_n=\Omega$ then one can easily show \eqref{eq:oldbalance} holds under a Neumann condition $\bQ_n=0$ on $\partial \Omega$ (see below).  However, a balance like \eqref{eq:oldbalance} does \emph{not} follow when there is a nontrivial free boundary such that $\Omega \setminus \Omega_n$ has positive measure.

Let us define the \emph{retreat loss} during the $n$th time step:
\begin{equation}
R_n = \int_{\Omega_n^r} u_{n-1}. \label{eq:retreatlossseries}
\end{equation}
By \eqref{eq:semimassconserve} on $\Omega_n$,
    $$M_n - M_{n-1} = \int_{\Omega_n} (u_n - u_{n-1}) - \int_{\Omega_n^r} u_{n-1} = \Delta t_n \int_{\Omega_n} (- \Div \bQ_n + F_n) \, - R_n.$$
Because $\bQ_n=0$ along $\partial \Omega_n$ by \eqref{eq:Qiscontinuous} and \eqref{eq:Qiszero}, and assuming $\partial \Omega_n$ is Lipschitz,
\begin{equation}
M_n = M_{n-1} + C_n - R_n. \label{eq:newbalance}
\end{equation}

\emph{A posteriori} statement \eqref{eq:newbalance}, replacing \eqref{eq:oldbalance}, suggests what degree of conservation is achievable in time-stepping numerical free-boundary models.  Computing the retreat loss $R_n$ quantifies the conservation error caused by the constraint $u_n\ge 0$.  Consistency suggests $R_n$ should vanish in the $\Delta t_n\to 0$ limit, and in fact the retreat loss $R_n$ can be bounded \emph{a priori} as follows.  Recalling inequality \eqref{eq:strongconditionwherezero}, we have $0 \le u_{n-1} \le -\Delta t_n\,F_n(u_n,x) = -\Delta t_n\,F_n(0,x)$ on $\Omega_n^r$ and thus
\begin{equation}
R_n \le \Delta t_n \int_\Omega \max\{0,-F_n(0,x)\}. \label{eq:retreatbound}
\end{equation}
In words, the retreat loss is bounded by the maximum amount of ablation which the climate can apply to a bare substrate during the time step.  Given a conservation error tolerance, estimate \eqref{eq:retreatbound} can be used to put an upper bound on $\Delta t_n$.

\section{Fully-discrete models}  \label{sec:spacediscretized}

So far we have treated fluid-layer mass conservation models in semi-discretized form, as a sequence of continuous-space free-boundary problems.  We now add spatial discretization, first an unstructured finite volume (FV) method \cite{LeVeque2002}, and later adding a finite element (FE) space of admissible thickness functions, and we reconsider mass conservation in these fully-discretized settings.

\subsection{Unstructured finite volumes} \label{subsec:spacenotation}  To set notation for spatially-discretized schemes, assume $\Omega \subset \RR^d$ is polygonal.  (We will use language suitable for the $\RR^2$ case, ``polygon,'' ``edge,'' and etc.)  Let us assume that $\Omega$ is tiled by open polygonal cells $\omega_j$, indexed by $j\in J$ with $|J|<\infty$, with area $|\omega_j|$, so that $\omega_j \cap \omega_k = \emptyset$ for $j\ne k$, $\bar\Omega = \bigcup_{j\in J} \bar \omega_j$, and $|\Omega| = \sum_{j\in J} |\omega_j|$.  We say that an edge, denoted by the ordered pair $(j,k)$, exists between cell $j$ and cell $k$ if $\bar\omega_j \cap \bar \omega_k$ has positive $(d-1)$-measure (length) $\ell_{(j,k)}>0$.  The set of edges for cell $\omega_j$ is denoted $\mathcal{E}_j=\{k\,\big|\text{edge } (j,k) \text{ exists}\}$.  Note that cells may be non-convex, the number of edges per cell may vary, and hanging nodes are allowed.

Suppose now that the strong form \eqref{eq:semimassconserve} is discretized using the following generic FV scheme.  The discrete thickness $u_n^j$ in cell $j$ is interpreted as an average \cite{LeVeque2002}, and similarly $F_n^j$ denotes the average source term for the cell:
\begin{equation}
u_n^j \approx \frac{1}{|\omega_j|} \int_{\omega_j} u_n(x), \qquad F_n^j \approx \frac{1}{|\omega_j|} \int_{\omega_j} F_n(u_n,x).  \label{eq:fvthickness}
\end{equation}
(One may suppose $F_n^j$ is computed by a quadrature scheme, but such details will not matter.)  The scheme includes some method for calculating discrete (scalar) normal flux across each edge $(j,k)$:
\begin{equation}
Q_n^{(j,k)} \approx \frac{1}{\ell_{(j,k)}} \int_{(j,k)} \bQ_n(\grad u_n,u_n,x) \cdot \bn_{(j,k)}. \label{eq:fvflux}
\end{equation}
Here $\bn_{(j,k)}$ denotes the unit normal vector to edge $(j,k)$ directed outward from $\omega_j$; thus $\bn_{(k,j)} = -\bn_{(j,k)}$.  Presumably the fluxes $Q_n^{(j,k)}$ are approximated using values $\{u_n^l\}$, though again the details are not important.

We now require the scheme to satisfy interior conservation.  That is, we require that between any two adjacent \emph{fluid-filled} cells we have flux balance across the edge:
\begin{equation}
  u_n^j u_n^k > 0 \quad \implies \quad Q_n^{(k,j)}=-Q_n^{(j,k)}.  \label{eq:fvlocalconservation}
\end{equation}
The hypothesis in \eqref{eq:fvlocalconservation} is important.  We do \emph{not} expect discrete conservation at the free boundary, because a flux scheme applied at the edge of a fluid-free (\emph{dry}) cell, facing a fluid-filled (\emph{wet}) cell, cannot be expected to compute a flux which balances the nonzero flux generated by the geometry (and stress state, etc.) of the wet cell.  Indeed, advance of the fluid layer into a dry cell requires flux imbalance at such edges, and likewise for a retreat which leaves behind a dry cell.

Finally we require that if $u_n^j>0$ then the scheme approximates \eqref{eq:semimassconserve} using the obvious FV formula based on the fluxes:
\begin{equation}
\frac{u_n^j - u_{n-1}^j}{\Delta t_n} + \frac{1}{|\omega_j|} \sum_{k\in \mathcal{E}_j} Q_n^{(j,k)} \ell_{(j,k)} = F_n^j. \label{eq:fvmassconserve}
\end{equation}
(Notationally, equation \eqref{eq:fvmassconserve} appears to be the backward Euler scheme, but in fact the time-stepping is quite general; see Section \ref{sec:strongform} and Appendix \ref{app:rk2}.)  However, \eqref{eq:fvmassconserve} only applies when the cell is wet at the end of the time step ($u_n^j>0$).  For dry cells we do not, for now, state any equation other than $u_n^j=0$, but see Subsection \ref{subsec:ncp}.

Many schemes can be given interpretations \eqref{eq:fvthickness}--\eqref{eq:fvmassconserve}, including FV methods for hyperbolic problems \cite{LeVeque2002}, and more-general schemes for diffusive problems \cite{Bueler2016,Morton2018}.  They will differ in how the equations are solved, how the free-boundary conditions are applied, and what are the consequent stability and convergence properties.  Indeed \eqref{eq:fvthickness}--\eqref{eq:fvmassconserve} may not suffice to give a unique scheme even when formulas for the edge fluxes $Q_n^{(j,k)}$ are added, but these axioms suffice to allow the conservation error quantification given next.

\subsection{The discrete-space ``boundary leak''}  \label{subsec:leak}  For schemes satisfying \eqref{eq:fvthickness}--\eqref{eq:fvmassconserve} we now define \emph{a posteriori} computable time series for conservation of mass.  The following discrete formulas, with superscript ``$h$'', have analogs in Section \ref{sec:timeseries}:
\begin{equation}
M_n^h = \sum_j u_n^j |\omega_j|, \quad C_n^h = \Delta t_n\!\!\sum_{u_n^j>0} F_n^j |\omega_j|, \quad R_n^h = \sum_{u_n^j=0} u_{n-1}^j |\omega_j|.  \label{eq:fvtimeseriesdefn}
\end{equation}
Now \eqref{eq:fvmassconserve} implies
\begin{align}
M_n^h - M_{n-1}^h &= \sum_{u_n^j>0} (u_n^j - u_{n-1}^j) |\omega_j| - \sum_{u_n^j=0} u_{n-1}^j |\omega_j| \notag \\
   &= - \Delta t_n\,\sum_{u_n^j>0}\, \sum_{k\in\mathcal{E}_j} Q_n^{(j,k)} \ell_{(j,k)} + C_n^h - R_n^h.  \label{eq:fvnewbalance}
\end{align}
Interior conservation \eqref{eq:fvlocalconservation} reduces the remaining sum to one over edges between wet and dry cells.  We call this residual sum the \emph{boundary leak} (Figure \ref{fig:fvmesh-leak}):
\begin{equation}
B_n^h = \Delta t_n \sum_{u_n^j > 0, u_n^k = 0, k\in\mathcal{E}_j} Q_n^{(j,k)} \ell_{(j,k)}. \label{eq:fvdefineleak}
\end{equation}
This is the net amount of unbalanced flux along the discrete free boundary.

These time series allow us to replace \eqref{eq:newbalance} with a fully-discrete balance:
\begin{equation}
  M_n^h = M_{n-1}^h + C_n^h - R_n^h - B_n^h. \label{eq:fvfinalbalance}
\end{equation}
Note that the masses $M_n^h$ and the retreat losses $R_n^h$ are nonnegative while the climate inputs $C_n^h$ and the boundary leaks $B_n^h$ can be of either sign.

The boundary leak is a numerical error caused by the spatial discretization.  That is, the continuous-space flux along the free-boundary is zero because of the regularity of the solution ($u_n \in W_0^{1,p}(\Omega)$) and by flux conditions \eqref{eq:Qiscontinuous} and \eqref{eq:Qiszero}.  Note that if the free boundary is well-behaved, which is beyond our scope to show even under strong assumptions on the data, and which is nontrivially related to the substrate topography \cite{Bueler2016}, then we expect $B_n^h\to 0$ as $h\to 0$.  Contrast the retreat loss $R_n^h$; it is also a numerical error but it appears in time semi-discretization and it should stabilize at nonzero values under spatial refinement.

\begin{figure}[ht]
\begin{center}
\begin{tikzpicture}[scale=0.35]
  \draw[gray, thick] (3,0) -- (6,3);
  \draw[gray, thick] (0,5) -- (3,5);
  \draw[gray, thick] (0,10) -- (4,9);
  \draw[gray, thick] (0,15) -- (4,13);
  \draw[gray, thick] (4,13) -- (4,9);
  \draw[gray, thick] (4,13) -- (8,14);
  \draw[gray, thick] (8,14) -- (8,15);
  \draw[gray, thick] (12,11) -- (18,15);
  \draw[gray, thick] (10,5) -- (14,3);
  \draw[gray, thick] (10,1) -- (14,3);
  \draw[gray, thick] (14,3) -- (18,3);
  \draw[gray, thick] (6,3) -- (10,1);
  \draw[gray, thick] (6,3) -- (10,5);
  \draw[gray, thick] (10,0) -- (10,1);
  \draw[gray, thick] (10,5) -- (8,8);
  \draw[gray, thick] (8,8) -- (12,11);
  \draw[gray, thick] (16,9) -- (18,9);

  \draw[line width=2.0pt] (6,3) -- (3,5) -- (4,9) -- (8,8) -- (8,14) --
                          (12,11) -- (16,9) -- (10,5) -- (14,3) -- (10,1) -- cycle;
  \draw (6,6) node {$u>0$};
  \draw (11,8) node {$u>0$};
  \draw (9.7,10.8) node {$u>0$};
  \draw (10,3) node {$u>0$};
  \draw (2,2) node {$0$};
  \draw (1.5,7.5) node {$0$};
  \draw (2,12) node {$0$};
  \draw (4.5,14.5) node {$0$};
  \draw (6,11) node {$0$};
  \draw (12,14) node {$0$};
  \draw (15.5,11.5) node {$0$};
  \draw (13.5,5.5) node {$0$};
  \draw (13,0.5) node {$0$};
  \draw (6.5,0.5) node {$0$};
\end{tikzpicture}
\end{center}
\caption{The ``boundary leak'' $B_n^h$ is computed along those edges where wet and dry cells meet.}
\label{fig:fvmesh-leak}
\end{figure}
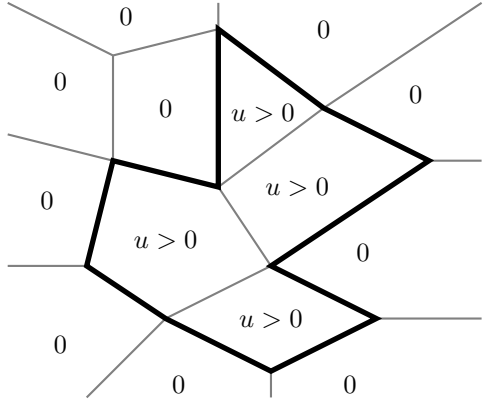

In summary, in a free-boundary FV computation we may report computable time series $\{M_n^h,C_n^h,R_n^h,B_n^h\}$ which balance as in \eqref{eq:fvfinalbalance} (up to rounding error).  Time series $R_n^h$ and $B_n^h$ are conservation errors intrinsic to free-boundary fluid layer models.

\subsection{Complementarity and cell-wise conservation}  \label{subsec:ncp}  The continuous-space, discrete-time weak formulation in Sections \ref{sec:weakform} and \ref{sec:wellposed}, using variational inequalities (VIs) \eqref{eq:theVI}, would often be solved using finite element (FE) discretization \cite[for example]{CalvoDuranyVazquez2000,JouvetBueler2012,
JouvetBuelerGraeserKornhuber2013}, but we have just applied the FV language of discrete conservation.  These views can be harmonized by observing that a VI is equivalent to a nonlinear complementarity problem (NCP) \cite{FacchineiPang2003,KinderlehrerStampacchia1980}, and both practical solver algorithms and clearer intuition result from this observation.  The dual-mesh schemes described next are both conforming and implementable using finite-dimensional NCP solvers.  The shallow ice sheet solver described in \cite{Bueler2016} serves as an example of the combined FV, FE, and VI/NCP techniques described here.

Suppose we discretize using an FE subspace $S^h \subset \mathcal{X} = W_0^{1,p}(\Omega)$, with a nodal basis of $m$ hat functions $\{\psi_i\}$, based on a triangulation (or other mesh) of $\Omega$ with resolution $h$.  Consider problem \eqref{eq:theVI} on this space, namely
\begin{equation}
\ip{A_n(u_n^h)}{v^h-u_n^h} \ge 0 \quad \text{for all } v \in \mathcal{K} \cap S^h,   \label{eq:FEtheVI}
\end{equation}
where $A_n$ is given by \eqref{eq:defineAn} and (as usual) $\mathcal{K} = \{u\in \mathcal{X}\,\big|\,u\ge 0\}$.  Under the same Section \ref{sec:wellposed} hypotheses considered for \eqref{eq:theVI}, we assume problem \eqref{eq:FEtheVI} is well-posed for $u_n^h \in \mathcal{K} \cap S^h$.  Next we suppose the nodal basis is admissible so that $\psi_i(x)\ge 0$ on $\Omega$ and $v(x) = \sum_{i=1}^m v(x_i) \psi_i(x)$ for the nodes $x_i$.  (For example, the usual hat-function bases for $P_1$ and $Q_1$ elements would satisfy this hypothesis, but not the nodal $P_2$ basis \cite{Elmanetal2014}.)  Then we can represent the FE solution $u_n^h$ by a vector $\tilde u \in \RR_+^m$, i.e.~$\tilde u_i = u_n^h(x_i) \ge 0$.

Up to isomorphism the nonlinear operator in FE formulation \eqref{eq:FEtheVI} is a map $\tilde A:\RR_+^m \to \RR^m$ with entries $\tilde A(\tilde u)^i = \ip{A_n(u_n^h)}{\psi_i} \in \RR$.  The finite-dimensional VI \eqref{eq:FEtheVI} is equivalent to the nonlinear complementarity problem (NCP)
\begin{equation}
\tilde u_i \ge 0, \quad \tilde A(\tilde u)^i \ge 0, \quad \tilde u_i \tilde A(\tilde u)^i = 0 \label{eq:FEtheNCP}
\end{equation}
\cite[Theorem I.5.5]{KinderlehrerStampacchia1980}; see also \cite{FacchineiPang2003}.  (By nonnegativity of the factors the complementarity condition can be regarded either entrywise or as an inner-product.)  NCP \eqref{eq:FEtheNCP} is nonlinear even if the operator $A_n$ is linear, and thus iteration is expected in any numerical solution.  Scalable Newton schemes for NCP problems are described in \cite{BensonMunson2006}; relevant applications appear in \cite{Brinkerhoffetal2017,Bueler2016}.

In our fluid-layer context the intuition behind NCP \eqref{eq:FEtheNCP} is straightforward.  Namely, at convergence of the numerical solver:
\begin{itemize}[leftmargin=25mm]
\item[$\tilde u_i\ge 0$\,:] the layer thickness at each node $x_i$ is nonnegative,
\item[$\tilde A(\tilde u)^i \ge 0$\,:] the balance between flow and climate inputs, represented by the residual of the operator $A_n$ in the direction of test function $\psi_i$, never removes more mass than was already present, and
\item[$\tilde u_i \tilde A(\tilde u)^i = 0$\,:] at each location $x_i$ either the thickness is zero or the flow and climate are in exact balance.
\end{itemize}

\smallskip
When a value $\tilde A(\tilde u)^i$ is zero then mass conservation (balance) equation \eqref{eq:semimassconserve} holds at node $x_i$, but only in an FE sense.  That is, a weighted-average of the integrand in \eqref{eq:defineAn}, over the support of $\psi_i(x)$, is zero.  Tradition and climate-modeling practice regards such an averaged sense of discrete balance as inferior to exact local balance \eqref{eq:fvlocalconservation}.  However, we may adapt \eqref{eq:FEtheNCP} to an FV view by assuming that for each FE node $x_i$ there is a unique corresponding FV cell $\omega_i$ (Subsection \ref{subsec:spacenotation}).  Schemes satisfying this condition, such as \cite{Bueler2016,EwingLinLin2002,Ringleretal2013}, have a ``dual mesh,'' namely cells for conservation plus a mesh for representing the solution.  Note we will need no detailed assumptions about the mesh correspondence in the following computations.

Now we compute the residual for the characteristic function $\mathbbm{1}_{\omega_i}$ of an FV cell:
\begin{align}
\hat A(\tilde u)^i &= \ip{A_n(u_n^h)}{\mathbbm{1}_{\omega_i}}  \label{eq:cellwiseresidual} \\
                   &= \int_{\omega_i} \left(u_n^h - \Delta t_n F_n^h - u_{n-1}^h\right) + \Delta t_n \sum_{k\in\mathcal{E}_i} \int_{(i,k)} \bQ_n^h\cdot \bn_{(i,k)}. \notag
\end{align}
where $F_n^h(x) = F_n(u_n^h,x)$, $\bQ_n^h(x) = \bQ_n(\grad u_n^h, u_n^h,x)$, and $i\in\{1,\dots,m\}$.  Regarding the flux integral on the right we again assume interior balance \eqref{eq:fvlocalconservation}.  The integral $\ip{A_n(u_n^h)}{\mathbbm{1}_{\omega_i}}$ must be understood in a distributional sense, for instance as a limit using mollification of $\mathbbm{1}_{\omega_i}$.

The NCP corresponding to the VI for \eqref{eq:cellwiseresidual}, namely
\begin{equation}
\tilde u_i \ge 0, \quad \hat A(\tilde u)^i \ge 0, \quad \tilde u_i \hat A(\tilde u)^i = 0 \label{eq:FVtheNCP}
\end{equation}
in $\RR^m$, has an interpretation as before even though it mixes FE and FV aspects.  For each cell $\omega_i$ the nodal thickness $\tilde u_i$ is nonnegative, the flow and climate will not remove more mass than was already present in the cell ($\hat A(\tilde u)^i \ge 0$), and either the nodal thickness is zero or conservation (balance) is exact in a cell-wise sense.  Note that $\mathbbm{1}_{\omega_i} \notin \mathcal{X}$, so this is a Petrov-Galerkin formulation, but the scheme is conforming in the sense that $u_n^h\in \mathcal{K}\cap S^h$ is admissible \cite{Elmanetal2014}.  Such a combined ``finite volume element'' viewpoint is not new as it applies to PDE problems \cite[for example]{Cai1990,EwingLinLin2002}, but it seems not to have been used for VIs except in \cite{Bueler2016}.

However, solution of \eqref{eq:FVtheNCP} implies revised mass accounting relative to Subsection \ref{subsec:leak}.  We redefine
\begin{equation}
\hat M_n^h = \int_\Omega u_n^h, \quad \hat C_n^h = \Delta t_n \sum_{\tilde u_i>0} \int_{\omega_i} F_n^h, \quad \hat R_n^h = \sum_{\tilde u_i=0} \int_{\omega_i} u_{n-1}^h, \label{eq:newfvtimeseriesdefn}
\end{equation}
to replace \eqref{eq:fvtimeseriesdefn}, and
\begin{equation}
\hat B_n^h = \Delta t_n \sum_{\tilde u_i > 0, \tilde u_k = 0, k\in\mathcal{E}_j} \int_{(i,k)} \bQ_n^h \cdot \bn_{(i,k)} \label{eq:newfvdefineleak}
\end{equation}
to replace \eqref{eq:fvdefineleak}.  Noting that $u_n^h$ may be nonzero on a cell $\omega_i$ corresponding to a zero nodal thickness $\tilde u_i=0$, the following calculation applies if $\tilde u$ solves NCP \eqref{eq:FVtheNCP}:
\begin{align}
\hat M_n^h - \hat M_{n-1}^h &= \sum_{\tilde u_i>0} \int_{\omega_i} u_n^h - u_{n-1}^h + \sum_{\tilde u_i=0} \int_{\omega_i} u_n^h - u_{n-1}^h \label{eq:newfvcalculation} \\
  &= \hat C_n^h - \Delta t_n \sum_{\tilde u_i>0} \sum_{k\in \mathcal{E}_i} \int_{(i,k)} \bQ_n^h \cdot \bn_{(i,k)} + \sum_{\tilde u_i=0} \int_{\omega_i} u_n^h - \hat R_n^h \notag
\end{align}
The flux sum again simplifies through cancellation by interior conservation \eqref{eq:fvlocalconservation}, but now we must add a new time series, which we call the \emph{cell slop}, because the support of $u_n^h$ generally extends outside of the wet cells:
\begin{equation}
\hat S_n^h = \sum_{\tilde u_i=0} \int_{\omega_i} u_n^h. \label{eq:fvdefineslop}
\end{equation}

With the revised definitions, by \eqref{eq:newfvcalculation} the following balance holds,
\begin{equation}
\hat M_n^h = \hat M_{n-1}^h + \hat C_n^h - \hat R_n^h - \hat B_n^h + \hat S_n^h, \label{eq:newfvfinalbalance}
\end{equation}
now replacing both \eqref{eq:newbalance} and \eqref{eq:fvfinalbalance}.  Time series \eqref{eq:newfvtimeseriesdefn}, \eqref{eq:newfvdefineleak}, and \eqref{eq:fvdefineslop} are computable \emph{a posteriori} although quadrature may be needed depending on the form of functions $F_n$ and $\bQ_n$.

To summarize, \eqref{eq:newfvfinalbalance} identifies three conservation errors for free-boundary problems which are not present in the fixed-boundary case.  The retreat loss $R_n^h$ goes to zero under temporal refinement (Section \ref{sec:timeseries}), the boundary leak $B_n^h$ goes to zero under spatial refinement (Subsection \ref{subsec:leak}), and the cell slop $\hat S_n^h$ is identically zero in a pure FV formulation.

\section{Conclusion} \label{sec:conclusion}

Global-scale fluid models sometimes claim exact discrete conservation as a goal \cite{Ringleretal2013,Thuburn2008}, but these claims are apparently made in a fixed-boundary context, while climate models increasingly incorporate free-boundary submodels.  Such multiphysics Earth system models need to conserve masses of the phases of water separately as they have different physical properties relevant to climate dynamics.  (For example, snow and ice have higher albedo and lower density than the liquid ocean.)  Within such models it is common for one or more fluids or phases to form a thin layer with a moving (free) lateral boundary, a description which applies to ice sheets, glaciers, ice shelves, sub-glacial liquid water, sea ice, and evaporable seas and lakes, among others.  Existing models sometimes include \emph{ad hoc} redistribution schemes, which globally balance the mass-conservation books, but we assert that discrete mass conservation cannot otherwise occur in such free-boundary subsystems, though conservation is recoverable in the temporal and spatial refinement limit.  Conscientious numerical model design therefore suggests quantification of conservation errors, not sweeping them under the refinement-limit (or other) rugs.

We have addressed the modeling of thin fluid layers through semidiscretization in time (Section \ref{sec:strongform}), and then weak formulation as a sequence of continuous-space VIs (Sections \ref{sec:weakform}--\ref{sec:timeseries}), always based on the fundamental nonnegative thickness condition.  On the other hand, spatial discretization must also be applied, so we interpret discrete mass conservation errors first through an FV framework (Section \ref{sec:spacediscretized}), then reconciling this viewpoint to FE solution of the VIs (Subsection \ref{subsec:ncp}).  The essential intent of Section \ref{sec:spacediscretized} is, in fact, to recommend that modelers do conservation arithmetic on the finite-dimensional NCP or VI form of the problem solved at each time step.

For numerical models we have identified the per time-step retreat set $\Omega_n^r$ (Subsection \ref{subsec:setdecompose}) and retreat mass loss $R_n$ (Section \ref{sec:timeseries}) as most fundamental.  Here $\Omega_n^r$ is the (continuous-space) region where the fluid layer thickness is positive at the beginning of the time step, and, through flow and (climatic) source terms, becomes zero at the end of the step.  By definition, fluid is completely removed from the retreat set at some time during the time step, and, intuitively, the numerical model has no access to the (substep) time and manner in which this occurs, other than in the inequality sense that the climate was sufficiently ablative so as to eliminate that fluid.  Note that the retreat area $|\Omega_n^r|$ can be arbitrarily large even for short time steps.  For example, in an ablating climate a large area of thin ice sheet or sea ice can melt, or a large area of water can evaporate and expose bare ground, in a short time.  The retreat loss $R_n$, a mass, can be bounded \emph{a priori} (Section \ref{sec:timeseries}), but still it cannot be exactly-balanced by a computable integral of the climatic source term during the time step.

These conclusions about retreat, which apply in the semi-discretized and continuous-space case, are independent of any particular spatial discretization scheme.  However, in Section \ref{sec:spacediscretized} we define conservation error quantities at the discretized free boundary.  With these computable time series in hand a numerical model can balance the books up to rounding error in a manner which properly reflects the free-boundary character of the model.  Even without \emph{a priori} control of the free boundary, a user can assess whether \emph{a posteriori} conservation errors are acceptably small, and shorten time steps or refine meshes if not.  Climate models, in particular, can thereby control some of the uncertainty in mass transfers between component fluids of the Earth system.

\bibliographystyle{siamplain}
\bibliography{lc}

\begin{thebibliography}{10}

\bibitem{Albrechtetal2011}
{\sc T.~Albrecht, M.~Martin, M.~Haseloff, R.~Winkelmann, and A.~Levermann},
  {\em Parameterization for subgrid-scale motion of ice-shelf calving fronts},
  The Cryosphere, 5 (2011), pp.~35--44.

\bibitem{Alexander1977}
{\sc R.~Alexander}, {\em Diagonally implicit {R}unge-{K}utta methods for stiff
  {ODE}s}, SIAM J. Numer. Anal., 14 (1977), pp.~1006--1021.

\bibitem{AlonsoSantillanaDawson2008}
{\sc R.~Alonso, M.~Santillana, and C.~Dawson}, {\em On the diffusive wave
  approximation of the shallow water equations}, Eur. J. Appl. Math., 19
  (2008), pp.~575--606.

\bibitem{AscherPetzold1998}
{\sc U.~Ascher and L.~Petzold}, {\em Computer {M}ethods for {O}rdinary
  {D}ifferential {E}quations and {D}ifferential-algebraic {E}quations}, SIAM
  Press, Philadelphia, PA, 1998.

\bibitem{Aschwandenetal2012}
{\sc A.~Aschwanden, E.~Bueler, C.~Khroulev, and H.~Blatter}, {\em An enthalpy
  formulation for glaciers and ice sheets}, J. Glaciol., 58 (2012),
  pp.~441--457.

\bibitem{BarrettLiu1993}
{\sc J.~W. Barrett and W.~B. Liu}, {\em Finite element approximation of the
  $p$-{L}aplacian}, Math. Comp., 61 (1993), pp.~523--537.

\bibitem{BensonMunson2006}
{\sc S.~Benson and T.~Munson}, {\em Flexible complementarity solvers for
  large-scale applications}, Optimization Methods and Software, 21 (2006),
  pp.~155--168.

\bibitem{Brinkerhoffetal2017}
{\sc D.~Brinkerhoff, M.~Truffer, and A.~Aschwanden}, {\em Sediment transport
  drives tidewater glacier periodicity}, Nature Commun., 8 (2017).

\bibitem{Bueler2016}
{\sc E.~Bueler}, {\em Stable finite volume element schemes for the shallow ice
  approximation}, J. Glaciol., 62 (2016), pp.~230--242.

\bibitem{BuelervanPelt2015}
{\sc E.~Bueler and W.~van Pelt}, {\em Mass-conserving subglacial hydrology in
  the {Parallel Ice Sheet Model} version 0.6}, Geoscientific Model Development,
  8 (2015), pp.~1613--1635.

\bibitem{Cai1990}
{\sc Z.~Cai}, {\em On the finite volume element method}, Numerische Mathematik,
  58 (1990), pp.~713--735.

\bibitem{CalvoDuranyVazquez2000}
{\sc N.~Calvo, J.~Durany, and C.~V\'azquez}, {\em Numerical computation of ice
  sheet profiles with free boundary models}, Appl. Numer. Math., 35 (2000),
  pp.~111--128.

\bibitem{DiazSchiavi1999}
{\sc J.~Diaz and E.~Schiavi}, {\em On a degenerate parabolic/hyperbolic system
  in glaciology giving rise to a free boundary}, Nonlinear Analysis, Theory,
  Methods \& Applications, 38 (1999), pp.~649--673.

\bibitem{EgholmNielsen2010}
{\sc D.~Egholm and S.~Nielsen}, {\em An adaptive finite volume solver for ice
  sheets and glaciers}, J. Geophys. Res.: Earth Surface, 115 (2010).

\bibitem{Elmanetal2014}
{\sc H.~C. Elman, D.~J. Silvester, and A.~J. Wathen}, {\em Finite {E}lements
  and {F}ast {I}terative {S}olvers: with {A}pplications in {I}ncompressible
  {F}luid {D}ynamics}, Oxford University Press, 2nd~ed., 2014.

\bibitem{Evans2010}
{\sc L.~Evans}, {\em Partial {D}ifferential {E}quations}, American Mathematical
  Society, 2nd~ed., 2010.

\bibitem{EwingLinLin2002}
{\sc R.~E. Ewing, T.~Lin, and Y.~Lin}, {\em On the accuracy of the finite
  volume element method based on piecewise linear polynomials}, SIAM J. Numer.
  Analysis, 39 (2002), pp.~1865--1888.

\bibitem{FacchineiPang2003}
{\sc F.~Facchinei and J.-S. Pang}, {\em Finite-{D}imensional {V}ariational
  {I}nequalities and {C}omplementarity {P}roblems}, vol.~1, Springer, 2003.

\bibitem{Friedman1982}
{\sc A.~Friedman}, {\em Variational {I}nequalities and {F}ree {B}oundary
  {P}roblems}, Wiley Interscience, 1982.

\bibitem{GeorgeIverson2014}
{\sc D.~L. George and R.~M. Iverson}, {\em A depth-averaged debris-flow model
  that includes the effects of evolving dilatancy. {II}. {N}umerical
  predictions and experimental tests}, P. Roy. Soc. A-Math. Phy., 470 (2014).

\bibitem{GilbargTrudinger2001}
{\sc D.~Gilbarg and N.~Trudinger}, {\em Elliptic {P}artial {D}ifferential
  {E}quations of {S}econd {O}rder}, Springer, 2001.
\newblock Reprint of the 1998 edition.

\bibitem{GlowinskiMarroco1975}
{\sc R.~Glowinski and A.~Marroco}, {\em Sur l'approximation, par \'el\'ements
  finis d'ordre un, et la r\'esolution, par p\'enalisation-dualit\'e d'une
  classe de probl\`emes de {D}irichlet non lin\'eaires}, {R.A.I.R.O.}, 9
  (1975), pp.~41--76.

\bibitem{IdelsohnOnate2010}
{\sc S.~R. Idelsohn and E.~O{\~n}ate}, {\em The challenge of mass conservation
  in the solution of free-surface flows with the fractional-step method:
  Problems and solutions}, International Journal for Numerical Methods in
  Biomedical Engineering, 26 (2010), pp.~1313--1330.

\bibitem{JaroschSchoofAnslow2013}
{\sc A.~H. Jarosch, C.~G. Schoof, and F.~S. Anslow}, {\em Restoring mass
  conservation to shallow ice flow models over complex terrain}, The
  Cryosphere, 7 (2013), pp.~229--240.

\bibitem{JouvetBueler2012}
{\sc G.~Jouvet and E.~Bueler}, {\em Steady, shallow ice sheets as obstacle
  problems: well-posedness and finite element approximation}, SIAM J. Appl.
  Math., 72 (2012), pp.~1292--1314.

\bibitem{JouvetBuelerGraeserKornhuber2013}
{\sc G.~Jouvet, E.~Bueler, C.~Gräser, and R.~Kornhuber}, {\em A nonsmooth
  {N}ewton multigrid method for a hybrid, shallow model of marine ice sheets},
  in Recent Advances in Scientific Computing and Applications, vol.~586 of
  Contemporary Mathematics, American Mathematical Society, 2013, pp.~197--205.

\bibitem{Jouvetetal2011}
{\sc G.~Jouvet, J.~Rappaz, E.~Bueler, and H.~Blatter}, {\em Existence and
  stability of steady state solutions of the shallow ice sheet equation by an
  energy minimization approach}, J. Glaciol., 57 (2011), pp.~345--354.

\bibitem{KinderlehrerStampacchia1980}
{\sc D.~Kinderlehrer and G.~Stampacchia}, {\em An {I}ntroduction to
  {V}ariational {I}nequalities and their {A}pplications}, Pure and Applied
  Mathematics, Academic Press, 1980.

\bibitem{Kondic2003}
{\sc L.~Kondic}, {\em Instabilities in gravity driven flow of thin fluid
  films}, SIAM Rev., 45 (2003), pp.~95--115 (electronic).

\bibitem{LeVeque2002}
{\sc R.~J. LeVeque}, {\em Finite Volume Methods for Hyperbolic Problems},
  Cambridge Texts in Applied Mathematics, Cambridge University Press, 2002.

\bibitem{LeVequeetal2011}
{\sc R.~J. LeVeque, D.~L. George, and M.~J. Berger}, {\em Tsunami modelling
  with adaptively refined finite volume methods}, Acta Numerica, 20 (2011),
  pp.~211--289.

\bibitem{LipscombHunke2004}
{\sc W.~H. Lipscomb and E.~C. Hunke}, {\em Modeling sea ice transport using
  incremental remapping}, Mon. Wea. Rev., 132 (2004), pp.~1341--1354.

\bibitem{Maxwelletal2015}
{\sc R.~M. Maxwell, L.~E. Condon, and S.~J. Kollet}, {\em A high-resolution
  simulation of groundwater and surface water over most of the continental {US}
  with the integrated hydrologic model {ParFlow} v3}, Geoscientific Model
  Development, 8 (2015), pp.~923--937.

\bibitem{Morton2018}
{\sc K.~W. Morton}, {\em Numerical {S}olution of {C}onvection-{D}iffusion
  {P}roblems}, CRC Press, 2018.
\newblock reprint of the 1996 edition.

\bibitem{MortonMayers2005}
{\sc K.~W. Morton and D.~F. Mayers}, {\em Numerical {S}olutions of {P}artial
  {D}ifferential {E}quations: {A}n {I}ntroduction}, Cambridge University Press,
  2nd~ed., 2005.

\bibitem{Peral1997}
{\sc I.~Peral}, {\em Multiplicity of solutions for the $p$-{L}aplacian}.
\newblock Notes of the Second International School in Functional Analysis and
  Applications to Differential Equations, ICTP-Trieste, 1997.

\bibitem{PorterStirling1990}
{\sc D.~Porter and D.~Stirling}, {\em Integral {E}quations: {A} {P}ractical
  {T}reatment, from {S}pectral {T}heory to {A}pplications}, Cambridge
  University Press, 1990.

\bibitem{Raviart1970}
{\sc P.~A. Raviart}, {\em Sur la r\'esolution de certaines equations
  paraboliques non lin\'eaires}, J. Functional Anal., 5 (1970), pp.~299--328.

\bibitem{Ringleretal2013}
{\sc T.~Ringler, M.~Petersen, R.~Higdon, D.~Jacobsen, P.~Jones, and
  M.~Maltrud}, {\em A multi-resolution approach to global ocean modeling},
  Ocean Modelling, 69 (2013), pp.~211--232.

\bibitem{Schoofetal2012}
{\sc C.~Schoof, I.~J. Hewitt, and M.~A. Werder}, {\em Flotation and free
  surface flow in a model for subglacial drainage. {P}art {I}: {D}istributed
  drainage}, J. Fluid Mech., 702 (2012), pp.~126--156.

\bibitem{Thorndikeetal1975}
{\sc A.~Thorndike, D.~Rothrock, G.~Maykut, and R.~Colony}, {\em The thickness
  distribution of sea ice}, J. Geophys. Res., 80 (1975), pp.~4501--4513.

\bibitem{Thuburn2008}
{\sc J.~Thuburn}, {\em Some conservation issues for the dynamical cores of
  {NWP} and climate models}, J. Comput. Phys., 227 (2008), pp.~3715--3730.

\bibitem{Vazquez2007}
{\sc J.~L. V{\'a}zquez}, {\em The {P}orous {M}edium {E}quation}, Oxford
  University Press, 2007.

\bibitem{Winkelmannetal2011}
{\sc R.~Winkelmann, M.~A. Martin, M.~Haseloff, T.~Albrecht, E.~Bueler,
  C.~Khroulev, and A.~Levermann}, {\em The {P}otsdam {P}arallel {I}ce {S}heet
  {M}odel ({PISM-PIK}) {P}art 1: {M}odel description}, The Cryosphere, 5
  (2011), pp.~715--726.

\end{thebibliography}

\appendix

\section{Inequalities for $p$-norms}   \label{app:pinequalities}  Versions of the inequalities in the next two Lemmas appear in the literature, at least as early as \cite{GlowinskiMarroco1975}, but here the results apply in $\RR^d$---contrast \cite{BarrettLiu1993} for the $\RR^2$ case---and have complete proofs and explicit constants.  The first two proofs follow \cite[Appendix A]{Peral1997}.

\begin{lemma}  \label{lem:pbiginequality}  If $p\ge 2$ and $x,y\in\RR^d$ then
\begin{equation}
\left(|x|^{p-2} x - |y|^{p-2} y\right)\cdot(x-y) \ge 2^{2-p} |x-y|^p. \label{eq:pbiginequality}
\end{equation}
The constant is sharp; consider $y=-x$.
\end{lemma}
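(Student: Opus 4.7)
The case $p=2$ reduces to the identity $(x-y)\cdot(x-y) = |x-y|^2$, so I focus on $p>2$ and assume $x\ne y$. The plan is to interpret the left side as a line integral of $\nabla^2 F$ along the segment from $y$ to $x$, where $F(z) = |z|^p/p$ has gradient $|z|^{p-2}z$. Setting $\xi = x-y$ and $z_t = (1-t)y + tx$, the fundamental theorem of calculus gives
\[
  (|x|^{p-2}x - |y|^{p-2}y)\cdot\xi = \int_0^1 \nabla^2 F(z_t)\,\xi\cdot\xi\,dt.
\]

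I would then expand $\nabla^2 F(z) = |z|^{p-2}I + (p-2)|z|^{p-4}\,z\otimes z$, observing that simply dropping the rank-one term (using only $\nabla^2 F(z)\xi\cdot\xi \ge |z|^{p-2}|\xi|^2$) loses the sharp constant. Instead I would orthogonally decompose $z_t = \sigma(t)\hat\xi + w$ with $\hat\xi = \xi/|\xi|$, $w\perp\xi$ independent of $t$ of length $W\ge 0$, and $\sigma(t)$ affine in $t$ with slope $L := |\xi|$. Then $|z_t|^2 = \sigma(t)^2 + W^2$ and $z_t\cdot\xi = L\sigma(t)$, so a brief algebraic simplification turns the integrand into $L^2 H'(\sigma(t))$, where
\[
  H(\sigma) = \sigma(\sigma^2 + W^2)^{(p-2)/2}, \qquad H'(\sigma) = (\sigma^2+W^2)^{(p-4)/2}\bigl[(p-1)\sigma^2 + W^2\bigr].
\]
Changing variables from $t$ to $\sigma$ collapses the integral to $L\bigl[H(\sigma_1)-H(\sigma_0)\bigr]$ with $\sigma_1-\sigma_0 = L$.

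It therefore suffices to show $H(\sigma_0+L) - H(\sigma_0) \ge 2^{2-p} L^{p-1}$ for every $\sigma_0\in\RR$, $L>0$, $W\ge 0$. Here I would use two elementary symmetries: $H$ is odd in $\sigma$, and $H'$ is even and (for $p\ge 2$) nondecreasing in $|\sigma|$. These imply that the difference, viewed as a function of $\sigma_0$, attains its minimum at the midpoint $\sigma_0 = -L/2$, where it equals $2H(L/2) = L(L^2/4 + W^2)^{(p-2)/2}$. Since $(p-2)/2\ge 0$, minimizing further over $W\ge 0$ at $W=0$ gives $L\cdot(L/2)^{p-2} = 2^{2-p}L^{p-1}$; tracing back $L = |x-y|$ yields the claimed bound.

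The step I expect to be the main obstacle is verifying that $H'$ is nondecreasing in $|\sigma|$ for all $p\ge 2$: differentiating $H'$ with respect to $\sigma^2$ produces a factor proportional to $(p-2)\bigl[(p-1)\sigma^2 + 3W^2\bigr]$, which is nonnegative since $p\ge 2$. The sharpness assertion ``$y=-x$'' then drops out automatically from the equality conditions $\sigma_0 = -L/2$ and $W = 0$, which jointly force $(x+y)/2 = 0$, matching the example stated in the lemma.
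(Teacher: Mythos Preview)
Your proof is correct and takes a genuinely different route from the paper's. The paper normalizes via homogeneity to $|x|\ge |y|>0$, introduces the scalar parameters $t=|y|/|x|$ and $s=\cos\angle(x,y)$, and then carries out a direct two-variable calculus argument on the rectangle $[0,1]\times[-1,1]$: it checks the boundary $s=-1$ using convexity of $t\mapsto t^{p-1}$ and then shows $\partial f/\partial s\ge 0$ by analyzing an auxiliary polynomial $g(t,s)$. Your argument instead writes the left side as $\int_0^1 \nabla^2 F(z_t)\,\xi\cdot\xi\,dt$ for $F(z)=|z|^p/p$, decomposes the segment into its components parallel and perpendicular to $\xi$, and reduces everything to the one-variable function $H(\sigma)=\sigma(\sigma^2+W^2)^{(p-2)/2}$, whose oddness and the evenness/monotonicity of $H'$ pin the minimum at the symmetric configuration $\sigma_0=-L/2$, $W=0$. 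Your approach is more geometric and explains \emph{why} $y=-x$ is extremal (the midpoint of the segment is the origin and the segment is radial), whereas the paper's approach is a self-contained elementary computation that avoids any differentiability issues of the Hessian near the origin. Both yield the sharp constant; yours would generalize more readily to other radially symmetric convex potentials.
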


\begin{proof}  The case where $x=0$ or $y=0$ is trivial, so assume, by swapping $x$ and $y$ as necessary, that $0 < |y| \le |x|$.  Define $t=|y|/|x|$ and $s = (x\cdot y)/(|x||y|)$ so that $0 < t \le 1$ and $|s|\le 1$.  Expand \eqref{eq:pbiginequality} and divide it by $|x|^p$, to get the equivalent statement
    $$1 - (t^{p-1}+t) s + t^p \ge 2^{2-p} \left(1 - 2 s t + t^2\right)^{p/2}.$$
It is easy to check that this holds when $s=1$, so now we will prove that $2^{2-p}$ is a lower bound for
	$$f(t,s) = \frac{1 - (t^{p-1}+t) s + t^p}{\left(1 - 2 s t + t^2\right)^{p/2}}.$$
on $(t,s) \in R=[0,1]\times[-1,1)$.  Note $1-2st+t^2 > 0$ on $ R$, so $f(t,s)$ is well-defined and differentiable on $R$.

Now, $f(t,-1) = \left(1 + t^{p-1}\right) / \left(1 + t\right)^{p-1}$ on $t\in[0,1]$.  Because $h(t)=t^{p-1}$ is convex for $p \ge 2$,
    $$\frac{1}{2^{p-1}} (1+t)^{p-1} = h(\tfrac{1}{2} 1 + \tfrac{1}{2} t) \le \tfrac{1}{2} h(1) + \tfrac{1}{2} h(t) = \tfrac{1}{2} (1 + t^{p-1}),$$
and thus $f(t,-1) \ge 2^{2-p}$.  On the other hand, a quick calculation shows
    $$\frac{\partial f}{\partial s} = \frac{t}{\left(1 - 2 s t + t^2\right)^{(p+2)/2}} g(t,s)$$
where
    $$g(t,s) = s(2-p) t (t^{p-2} + 1) + (p-1) (t^p+1) - t^{p-2} - t^2$$
is continuous on the closed rectangle $\bar R = [0,1]\times[-1,1]$.  We will show $g(t,s)\ge 0$ on $\bar R$, thus that $\partial f/\partial s \ge 0$ on $R$, and thus that $f(t,s)\ge f(t,-1) \ge  2^{2-p}$ on $R$.

Now,
    $$\frac{\partial g}{\partial s} = (2-p) t (t^{p-2} + 1) \le 0$$
on $\bar R$.  Define $G(t) = g(t,1)$.  We will show $G(t)\ge 0$ on $[0,1]$, thus that $g(t,s)\ge g(t,1)\ge 0$ on $\bar R$.  But $G(t)\ge 0$ is equivalent to $(p-1) (t-1) (t^{p-1}-1) \ge (t^{p-2} - t) (1 - t)$ which is in turn equivalent to $(p-1) (1 - t^{p-1}) \ge t^{p-2} - t$.  Note $(p-1) (1 - t^{p-1}) \ge 0$.  If $p\ge 3$ then $t^{p-2} - t \le 0$ so $G(t)\ge 0$ in that case.  On the other hand, if $2\le p < 3$ then
	$$\frac{t^{p-2} - t}{1 - t^{p-1}} = t^{p-2} \frac{1 - t^{3-p}}{1 - t^{p-1}} \le t^{p-2} \le 1 \le p-1$$
on $t\in[0,1)$, because $t^{p-1}\le t^{3-p}$ and thus $1 - t^{p-1} \ge 1 - t^{3-p}$.  But also $G(1)=0$, so $G(t)\ge 0$ on $[0,1]$. \end{proof}

\begin{lemma}  \label{lem:psmallinequality}  If $1<p\le 2$ and $x,y\in\RR^n$ then
\begin{equation}
\left(|x|^{p-2} x - |y|^{p-2} y\right)\cdot(x-y) \ge (p-1)\, |x-y|^2 \, \left(|x|+|y|\right)^{p-2}. \label{eq:psmallinequality}
\end{equation}
\end{lemma}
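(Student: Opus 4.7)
The plan is to view the left-hand side as a one-parameter calculus identity. Set $F(\xi)=|\xi|^{p-2}\xi$ and parameterize the segment from $y$ to $x$ by $z(t)=(1-t)y+tx$, $t\in[0,1]$. The fundamental theorem of calculus then gives
\[
\bigl(|x|^{p-2}x-|y|^{p-2}y\bigr)\cdot(x-y) \;=\; \int_0^1 (x-y)^T DF(z(t))(x-y)\,dt,
\]
and the integrand is now a pointwise quadratic form in the fixed vector $x-y$ with no remaining ``difference-of-two-terms'' structure to dispose of. This reformulation is the main idea.

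Next I would compute $DF(\xi)=|\xi|^{p-2}I+(p-2)|\xi|^{p-4}\xi\xi^T$ and establish the elementary pointwise lower bound $v^T DF(\xi) v\ge (p-1)|\xi|^{p-2}|v|^2$ for every $v\in\RR^d$. The first term already contributes $|\xi|^{p-2}|v|^2$, and because $p-2\le 0$, Cauchy--Schwarz $(\xi\cdot v)^2\le |\xi|^2|v|^2$ shows the second term is bounded below by $(p-2)|\xi|^{p-2}|v|^2$. Summing gives the claim. Inserting $v=x-y$ into the FTC identity reduces the lemma to the scalar estimate $\int_0^1 |z(t)|^{p-2}\,dt \ge (|x|+|y|)^{p-2}$, which is immediate from $|z(t)|\le|x|+|y|$ and the sign $p-2\le 0$.

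The main obstacle will be regularity when $p<2$: the field $F$ fails to be $C^1$ at the origin, and the segment $z(t)$ genuinely passes through $0$ when $x$ and $y$ are antiparallel. At such a $t_0$ one has $|z(t)|\sim c|t-t_0|$, so the integrand behaves like $|t-t_0|^{p-2}$, which remains integrable precisely because $p>1$; both sides of the desired inequality stay finite. To make the FTC step rigorous I would regularize by working with $F_\eps(\xi)=(|\xi|^2+\eps)^{(p-2)/2}\xi$, which is $C^\infty$ on $\RR^d$ and whose Jacobian satisfies the analogous lower bound $v^T DF_\eps(\xi) v \ge (p-1)(|\xi|^2+\eps)^{(p-2)/2}|v|^2$. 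The smooth computation then goes through verbatim, producing the inequality with $(|x|+|y|)^{p-2}$ replaced by $((|x|+|y|)^2+\eps)^{(p-2)/2}$; letting $\eps\downarrow 0$ via continuity of $F_\eps\to F$ on the left and monotone convergence on the right recovers \eqref{eq:psmallinequality}.
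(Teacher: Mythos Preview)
Your argument is correct and complete. The integral representation via the fundamental theorem of calculus, the pointwise lower bound $v^T DF(\xi)v\ge (p-1)|\xi|^{p-2}|v|^2$, the triangle-inequality step $|z(t)|\le |x|+|y|$, and the regularization $F_\eps(\xi)=(|\xi|^2+\eps)^{(p-2)/2}\xi$ to handle the singularity at the origin all go through as you describe. (One nitpick: the ``right'' in your limit step is a single number, not an integral, so ``monotone convergence'' is just monotonicity of a numerical sequence; and the trivial case $x=y=0$ should be set aside at the start so the right-hand side is unambiguous.)

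Your route is genuinely different from the paper's. The paper does not integrate along the segment. Instead it normalizes by homogeneity and symmetry to the case $|x|=1\ge|y|$, observes that only the two-dimensional span of $x$ and $y$ matters, writes $x=(1,0)$ and $y=(y_1,y_2)$ with $y_1^2+y_2^2\le 1$, and then proves the resulting two-variable inequality by direct algebraic manipulation with a case split on the sign of $y_1$ (using a mean-value estimate on $t\mapsto t^{p-1}$ in the positive-$y_1$ case). Your approach is more structural---it really uses that $F=\nabla(\tfrac{1}{p}|\xi|^p)$ is the gradient of a uniformly convex function on rays---and it generalizes immediately to other monotone operators of the same form; it also avoids any case analysis. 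The paper's approach, by contrast, is purely algebraic: no calculus on a segment, no Jacobian, no regularization argument, at the cost of some explicit coordinate computation.
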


\begin{proof}  Assuming $x,y$ are not both zero, by symmetry (swapping $x$ and $y$) and homogeneity (replacing $x,y$ with $\lambda x,\lambda y$) we can assume $|x| = 1 \ge |y|$.  Furthermore, by choosing a basis of $\RR^d$ we can have $x=(1,0,\dots,0)$ and $y=(y_1,y_2,0,\dots,0)$ where $y_1^2+y_2^2 \le 1$.  In these terms, the inequality we seek to prove is
\begin{align*}
&\left(1 - (y_1^2+y_2^2)^{\frac{p-2}{2}} y_1\right) (1-y_1) + (y_1^2+y_2^2)^{\frac{p-2}{2}} y_2^2 \\
&\qquad\qquad \ge (p-1)\, \left((1-y_1)^2+y_2^2\right) \left(1 + \sqrt{y_1^2+y_2^2} \right)^{p-2}.
\end{align*}
(Compare equation (A.4) in \cite{Peral1997}.)  But
\begin{align*}
1 - (y_1^2+y_2^2)^{\frac{p-2}{2}} y_1
      &\ge \begin{cases} 1-y_1, & y_1 \le 0, \\
                        1-y_1^{p-1}, & 0 \le y_1 \le 1 \end{cases}\Bigg\}
      \ge (p-1) (1-y_1).
\end{align*}
(The lower case in the last inequality is easy to prove by the mean-value-theorem applied to $\varphi(t)=t^{p-1}$, for which $\varphi'(1)=p-1$ is the minimum value of the derivative on $t\in[0,1]$.)  Also noting $(y_1^2+y_2^2)^{\frac{p-2}{2}} \ge 1$ and $\left(1 + \sqrt{y_1^2+y_2^2} \right)^{2-p} \ge 1$, because $|y|\le 1$ and $p-2\le 0$, thus
\begin{align*}
&\frac{\left(1 - (y_1^2+y_2^2)^{\frac{p-2}{2}} y_1\right) (1-y_1) + (y_1^2+y_2^2)^{\frac{p-2}{2}} y_2^2}
      {\left((1-y_1)^2+y_2^2\right) \left(1 + \sqrt{y_1^2+y_2^2} \right)^{p-2}} \\
&\qquad \ge \frac{(p-1) (1-y_1)^2 + y_2^2}
      {(1-y_1)^2+y_2^2} \,  \left(1 + \sqrt{y_1^2+y_2^2} \right)^{2-p} \\
&\qquad \ge \frac{(p-1) (1-y_1)^2 + (p-1) y_2^2}{(1-y_1)^2+y_2^2} = p-1.
\end{align*}
This proves \eqref{eq:psmallinequality}. \end{proof}

We will also need the following result of combining point-wise Lemma \ref{lem:psmallinequality} with integration over a set $\Omega$.

\begin{lemma} \label{lem:smallpbound}  Suppose $1<p\le 2$.  If $\Omega \subset \RR^d$ is measurable and if $\bu,\bv\in L^p(\Omega; \RR^k)$ for $k\ge 1$, then
\begin{equation}
    \int_\Omega \frac{|\bu-\bv|^p}{\left(|\bu|+|\bv|\right)^{2-p}} \ge \frac{\|\bu-\bv\|_{L^p}^2}{\big\||\bu|+|\bv|\big\|_{L^p}^{2-p}}. \label{eq:smallpbound}
\end{equation}
\end{lemma}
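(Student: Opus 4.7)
The inequality as written is not correct: scaling $(\bu,\bv)\mapsto(\lambda\bu,\lambda\bv)$ multiplies the left side of \eqref{eq:smallpbound} by $\lambda^{2p-2}$ and the right side by $\lambda^p$, and $2p-2<p$ whenever $p<2$, so the claim fails for large $\lambda$. Matching the single use of the lemma in the proof of Theorem~\ref{thm:plapwellposed}, where the numerator of the left integrand is $|\grad u-\grad v|^2$, the exponent $p$ in the statement is clearly a typographical error for $2$. I therefore plan to prove the intended, dimensionally consistent form
\begin{equation*}
    \int_\Omega \frac{|\bu-\bv|^2}{\left(|\bu|+|\bv|\right)^{2-p}} \ge \frac{\|\bu-\bv\|_{L^p}^2}{\big\||\bu|+|\bv|\big\|_{L^p}^{2-p}}.
\end{equation*}

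The plan is a single application of Hölder's inequality after a geometric-mean decomposition. Abbreviating $f=|\bu-\bv|$ and $g=|\bu|+|\bv|$, I would take the conjugate exponents $r=2/p$ and $s=2/(2-p)$, both at least $1$ because $1<p\le 2$, with $1/r+1/s=1$. On $\{g>0\}$ I factor
\begin{equation*}
    f^p = \left(\frac{f^2}{g^{2-p}}\right)^{p/2} g^{p(2-p)/2},
\end{equation*}
and extend $f^2/g^{2-p}$ by $0$ on $\{g=0\}$ (where necessarily $f=0$ since $f\le g$). Hölder with exponents $r,s$ then yields
\begin{equation*}
    \|\bu-\bv\|_{L^p}^p = \int_\Omega f^p \le \left(\int_\Omega \frac{f^2}{g^{2-p}}\right)^{p/2}\left(\int_\Omega g^p\right)^{(2-p)/2}.
\end{equation*}
Raising to the power $2/p$ and dividing by $\|g\|_{L^p}^{2-p}$ rearranges this into the claimed bound.

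Essentially no obstacle remains after choosing the Hölder split, but two small items need verification. First, finiteness: the pointwise bound $f\le g$ gives $f^2 g^{p-2}\le g^p$ on $\{g>0\}$ (using $p\le 2$), and $g\in L^p(\Omega)$ by hypothesis, so the left-hand integrand lies in $L^1(\Omega)$ and both sides of the target inequality are well-defined. Second, the convention $f^2/g^{2-p}:=0$ on $\{g=0\}$ is forced by $f=0$ there and does not alter any integral. Finding the right geometric-mean factorization is the only non-mechanical step; the rest is a clean Hölder estimate and a power rearrangement.
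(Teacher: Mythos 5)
Your proposal is correct and is essentially the paper's own argument: the identical H\"older split with conjugate exponents $r=2/p$, $s=2/(2-p)$ applied to the factorization $|\bu-\bv|^p = \bigl(|\bu-\bv|^2/(|\bu|+|\bv|)^{2-p}\bigr)^{p/2}\,(|\bu|+|\bv|)^{p(2-p)/2}$, followed by raising to the power $2/p$ and dividing by $\big\||\bu|+|\bv|\big\|_{L^p}^{2-p}$. You are also right that the exponent $p$ in the numerator of \eqref{eq:smallpbound} is a typographical slip for $2$: the paper's own proof, and its application in Theorem \ref{thm:plapwellposed}, establish exactly the $|\bu-\bv|^2$ version you prove, and your added remarks on the set where $|\bu|+|\bv|=0$ and on finiteness are harmless refinements of the same argument.
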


\begin{proof}  By H\"older inequality with $r=2/p$ and $s=2/(2-p)$, so $r^{-1}+s^{-1}=1$,
\begin{align*}
\int_\Omega |\bu - \bv|^p &= \int_\Omega \frac{|\bu-\bv|^p}{\left(|\bu|+|\bv|\right)^{p(2-p)/2}} \left(|\bu|+|\bv|\right)^{p(2-p)/2} \\
    &\le \left(\int_\Omega \frac{|\bu-\bv|^2}{\left(|\bu|+|\bv|\right)^{2-p}}\right)^{p/2} \left(\int_\Omega \left(|\bu|+|\bv|\right)^p\right)^{(2-p)/2},
\end{align*}
thus \eqref{eq:smallpbound}.
\end{proof}

Finally we recall the Poincar\'e inequality on the Sobolev space $W_0^{1,p}(\Omega)$.  This form, with an explicit but not optimal constant, is from \cite[section 7.8]{GilbargTrudinger2001}.

\begin{lemma} \label{lem:poincare}  If $\Omega\subset \RR^d$ is a bounded domain with volume $|\Omega|$, and if $1\le p<\infty$ then for all $u\in W_0^{1,p}(\Omega)$,
\begin{equation}
  \|u\|_{W^{1,p}(\Omega)}^p \le C(\Omega,p) \int_\Omega |\grad u|^p, \label{eq:poincare}
\end{equation}
where $C(\Omega,p)=1+(|\Omega|/\omega_d)^{p/d}$ and $\omega_d=(2 \pi^{d/2})/(d\,\Gamma(d/2))$ is the volume of the unit ball in $\RR^d$.
\end{lemma}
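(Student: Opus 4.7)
The plan is to reduce the statement to the pure Poincar\'e-type estimate
\begin{equation*}
\|u\|_{L^p(\Omega)}^p \le \left(\frac{|\Omega|}{\omega_d}\right)^{p/d} \int_\Omega |\grad u|^p,
\end{equation*}
since adding $\int_\Omega|\grad u|^p$ to both sides and recalling the definition of $\|\cdot\|_{1,p}$ in \eqref{eq:norm} immediately yields \eqref{eq:poincare} with the stated constant $C(\Omega,p)=1+(|\Omega|/\omega_d)^{p/d}$. By density of $C_c^\infty(\Omega)$ in $W_0^{1,p}(\Omega)$, it suffices to prove the estimate for $u\in C_c^\infty(\Omega)$, which I then extend by zero to all of $\RR^d$.

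First I would establish the classical potential representation
\begin{equation*}
u(x) = -\frac{1}{d\omega_d}\int_{\RR^d}\frac{(y-x)\cdot\grad u(y)}{|y-x|^d}\,dy,
\end{equation*}
by starting from $u(x)=-\int_0^\infty \partial_r u(x+r\omega)\,dr$ for each unit direction $\omega\in S^{d-1}$, averaging over the sphere, and converting back to Cartesian coordinates via $dy = r^{d-1}\,dr\,dS(\omega)$. Taking absolute values, and noting $\grad u=0$ outside $\Omega$, yields the pointwise Riesz-potential bound $|u(x)|\le (K*g)(x)$ where $K(z)=(d\omega_d)^{-1}|z|^{-(d-1)}$ and $g=|\grad u|\mathbbm{1}_\Omega$.

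With $q$ the conjugate exponent to $p$, split the integrand as $K = K^{1/q}\,K^{1/p}$ and apply H\"older to get
\begin{equation*}
(K*g)(x)^p \le \left(\int_\Omega K(y-x)\,dy\right)^{p/q}\int_\Omega K(y-x)\,g(y)^p\,dy.
\end{equation*}
Integrating in $x$ and applying Fubini to the last factor produces
\begin{equation*}
\int_\Omega (K*g)^p \le \left(\sup_x\int_\Omega K(y-x)\,dy\right)^{p/q}\cdot\left(\sup_y\int_\Omega K(y-x)\,dx\right)\cdot\int_\Omega g^p.
\end{equation*}
Because $K$ is radially decreasing, a rearrangement (Hardy--Littlewood) argument bounds each sup by the integral of $K$ over the ball $B_R$ of volume $|\Omega|$, namely $R=(|\Omega|/\omega_d)^{1/d}$; a polar-coordinate calculation gives $\int_{B_R}K = R$ exactly. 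Since $p/q+1=p$, the accumulated constant is $R^p=(|\Omega|/\omega_d)^{p/d}$, which is the desired estimate.

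The main obstacle is the rearrangement step: the bound
  $\int_\Omega |y-x|^{-(d-1)}\,dy\le \int_{B_R(x)}|y-x|^{-(d-1)}\,dy$
requires the observation that moving mass of $\Omega$ closer to $x$ can only increase the integral (since the kernel is radially decreasing), so the maximum over measurable sets of prescribed volume $|\Omega|$ is attained by the concentric ball $B_R(x)$. This is the one nontrivial ingredient; the H\"older bookkeeping, the density reduction, and the final addition of $\int_\Omega|\grad u|^p$ to pass from the $L^p$ estimate to the $W^{1,p}$ estimate are routine.
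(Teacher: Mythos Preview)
The paper does not supply its own proof of this lemma; it merely cites \cite[section 7.8]{GilbargTrudinger2001}. Your argument is correct and is exactly the Gilbarg--Trudinger proof (their potential representation combined with their Lemma~7.12): the Riesz-potential bound, the H\"older split of the kernel, and the rearrangement step computing $\int_{B_R}K=R$ all appear there, so you have in effect reconstructed the cited reference.
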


\section{Second-order Runge-Kutta time-discretization}  \label{app:rk2}  Section \ref{sec:strongform} describes the time semi-discretization of the continuum strong form \eqref{eq:massconserve}--\eqref{eq:constraint} using the $\theta$ method.  Such a one-stage method generates particular forms for the functions $\bQ_n(\bX,v,z)$ and $F_n(v,z)$ in equations \eqref{eq:semimassconserve}--\eqref{eq:semiconstraint}, and these functions then define weak formulation (VI) \eqref{eq:theVI}.  Here we illustrate how the corresponding functions $\bQ_n$ and $F_n$ can be generated for second-order Runge-Kutta (RK) schemes.

For the $m$-dimensional ODE system $\by' = \bg(t,\by)$ an $s$-stage RK scheme \cite{AscherPetzold1998} with time-step $h=\Delta t$ is given by constants $a_{ij},b_i,\tau_i$ and the equations
\begin{align}
  \by_{n,i} &= \by_{n-1} + h \sum_{j=1}^s a_{ij} \bg(t_{n-1} + \tau_j h, \by_{n,j}), \quad i=1,\dots,s \label{eq:RK2} \\
      \by_n &= \by_{n-1} + h \sum_{i=1}^s b_i \bg(t_{n-1} + \tau_i h, \by_{n,i}). \notag
\end{align}
\emph{Explicit} methods have $a_{ij}=0$ for $j\ge i$, i.e.~zeros on and above the diagonal in the Butcher tableau \cite{AscherPetzold1998}, while \emph{semi-implicit} methods have zeros above the diagonal.  Whereas general implicit RK schemes generate larger (nonlinear) systems, semi-implicit methods have the computational advantage that each stage generates an $m$-equation system.  Note that one must solve \eqref{eq:theVI} $s$ times to compute a time step using an $s$-stage explicit or semi-implicit RK scheme.

\emph{Diagonally-implicit} RK (DIRK) methods are semi-implicit methods for which the diagonal entries $a_{ii}$ are independent of $i$.  The accuracy of $s$-stage DIRK methods is limited to order $p=s+1$, and there exist strongly S-stable and stiffly-accurate \cite{AscherPetzold1998} DIRKs with order $p=s$ for $s=1,2,3$ \cite{Alexander1977}.  (``Strongly S-stable'' is also called ``stiff decay'' \cite{AscherPetzold1998}.)  The stability properties of these DIRK methods are helpful for mass conservation problems considered in the text, especially cases where $\bq$ has a leading-order diffusion term so that the $m$-dimensional method-of-lines ODE system is stiff.  In DIRK methods the linear system matrix can potentially be re-used at each stage.  (This matrix is $A = I - h a_{ii} J$ where the Jacobian $J$ is evaluated at the start of the time step, $J = \frac{\partial \bg}{\partial y}(t_{n-1},\by_{n-1})$.)

Now, as an illustration, we compute functions $\bQ_n$ and $F_n$ for two DIRK schemes.

\medskip
\renewcommand{\labelenumi}{\emph{(\alph{enumi})}}
\begin{enumerate}
\item The implicit midpoint rule is a $(s,p)=(2,2)$ A-stable DIRK scheme.  It uses a half backward Euler step followed by an explicit step:
\begin{align}
\tilde\by &= \by_{n-1} + \tfrac{1}{2} h \bg(t_{n-1}+\tfrac{1}{2}h,\tilde\by), \notag \\
\by_n &= \by_{n-1} + h \bg(t_{n-1}+\tfrac{1}{2}h,\tilde\by). \notag
\end{align}
Let $t_{n-1/2} = t_{n-1} + \tfrac{1}{2} \Delta t$.  Functions \eqref{eq:functionalforms} for the first stage are
  $$\tilde\bQ(\bX,v,x) = \tfrac{1}{2} \bq(\bX,v,x,t_{n-1/2}) \quad \text{and} \quad \tilde F(v,x) = \tfrac{1}{2} f(v,x,t_{n-1/2}).$$
Now let $\tilde u$ denote the weak solution to the first stage VI problem.  The functions for the explicit second stage are then $\bQ_n(\bX,v,x) = 0$ and
  $$\quad F_n(v,x) = f(\tilde u,x,t_{n-1/2}) - \Div \bq(\grad\tilde u,\tilde u,x,t_{n-1/2}).$$

\item The (unique) strongly S-stable $(s,p)=(2,2)$ scheme for which $0\le \tau_i\le 1$ \cite{AscherPetzold1998} has equations
\begin{align}
\tilde\by &= \by_{n-1} + \alpha h \bg(\tilde t,\tilde\by), \notag \\
\by_n &= \by_{n-1} + (1-\alpha) h \bg(\tilde t,\tilde\by) + \alpha h \bg(t_n,\by_n). \notag
\end{align}
where $\alpha = 1-\frac{\sqrt{2}}{2}$ and $\tilde t = t_{n-1} + \alpha h$.  Functions for the first stage are
  $$\tilde\bQ(\bX,v,x) = \alpha \bq(\bX,v,x,\tilde t) \quad \text{and} \quad \tilde F(v,x) = \alpha f(v,x,\tilde t).$$
If $\tilde u$ denotes the solution to the first stage VI then the functions for the second stage are $\bQ_n(\bX,v,x) = \alpha \bq(\bX,v,x,t_n)$ and
   $$F_n(v,x) = (1-\alpha) f(\tilde u,x,\tilde t) + \alpha f(v,x,t_n) - (1-\alpha) \Div \bq(\grad\tilde u,\tilde u,x,\tilde t).$$
\end{enumerate}

\end{document}